\newtheorem{theorem}{Theorem}[section]
\newtheorem{proposition}[theorem]{Proposition}
\newtheorem{lemma}[theorem]{Lemma}
\newtheorem{corollary}[theorem]{Corollary}
\newtheorem{example}[theorem]{Example}
\newtheorem{remark}[theorem]{Remark}
\newenvironment{proof}{{\noindent\bf Proof.}}{\hfill $\Box$\par\vskip3mm}
\newcommand{\Ker}{{\rm Ker}\,}
\newcommand{\End}{{\rm End}}
\newcommand{\Ll}{\mathcal{L}}
\newcommand{\Mm}{\mathcal{M}}
\newcommand{\Rr}{\mathcal{R}}
\newcommand{\Tt}{\mathcal{T}}
\newcommand{\Ss}{\mathcal{S}}
\def\NN{{\mathbb N}}
\def\CC{{\mathbb C}}
\title{The Generating Condition for Coalgebras}
\author{\sc Miodrag Cristian Iovanov \small \\  Department of Algebra, University of Bucharest \\ Academiei 14, Bucharest, Romania \\and\\ State University of New York @ Buffalo, \\244 Mathematics Building, Buffalo NY, 14260}
\begin{document}
\baselineskip16pt
\maketitle

\date{}


\begin{abstract}
For a ring $R$, the properties of being (left) selfinjective or being cogenerator for the left $R$-modules do not imply one another, and the two combined give rise to the important notion of PF-rings. For a coalgebra $C$, (left) self-projectivity implies that $C$ is generator for right comodules and the coalgebras with this property were called right quasi-co-Frobenius; however, whether the converse implication is true is an open question. We provide an extensive study of this problem. We show that this implication does not hold, by giving a large class of examples of coalgebras having the "generating property". In fact, we show that any coalgebra $C$ can be embedded in a coalgebra $C_\infty$ that generates its right comodules, and if $C$ is local over an algebraically closed field, then $C_\infty$ can be chosen local as well. We also give some general conditions under which the implication "$C$-projective (left) $\Rightarrow C$ generator for right comodules" does work, and such conditions are when $C$ is right semiperfect or when $C$ has finite coradical filtration.
\footnote{The author was partially supported by the contract nr. 24/28.09.07 with UEFISCU "Groups, quantum groups, corings and representation theory" of CNCIS, PN II (ID\_1002)\\
{2000 \textit{Mathematics Subject Classification}. Primary 16W30; Secondary 16S50, 16D90, 16L30}\\
{\bf Keywords} coalgebra, Quasi-co-Frobenius, generator}
\end{abstract}


\section{Introduction}

Let $R$ be a ring or an algebra. There are two very basic properties of the ring, which are very important for the theory of rings and modules: a homological one, that $R$ is projective as left or right module and a categorical one, the fact that $R$ generates all its left (and right) modules. The dual properties, namely when is $R$ injective as left or right $R$-module on one hand (i.e. $R$ is selfinjective), and when does $R$ cogenerate its left or its right $R$-modules, have been the subject of much study in ring theory (see for example \cite{F2}, 4.20-4.23, 3.5 and references therein). The rings (algebras) that satisfy both conditions are the same as the PF-rings (pseudo-Frobenius), which are rings $R$ such that every faithful right $R$-module is a generator. There are many known equivalent characterizations of these rings as well as many connections of these rings with other notions, such as the QF-rings (quasi-Frobenius rings=artinian rings with annihilator duality between the left and right ideals, equivalently left and right artinian, cogenerator and self-injective rings), semiperfect rings, perfect rings, FPF rings or Frobenius algebras. They have been introduced as generalizations of Frobenius algebras, and they retain much of the module (representation) theoretic properties of these algebras. The following theorem recalls some equivalent characterizations of PF-rings (see \cite[4.20]{F2}) and for QF-rings (see also \cite{CR}):

\begin{theorem}
(1)$R$ is right PF if and only if it satisfies either one of the following conditions:\\
(i) $R$ is an injective cogenerator.\\
(ii) $R=\bigoplus\limits_{i=1}^n e_iR$ with $e_i^2=e_2$ and $e_iR$ is indecomposable injective with simple socle for all $i$.\\
(2) $R$ is a QF-ring if and only if every injective right $R$-module is projective and if and only if every injective left $R$-module is projective.
\end{theorem}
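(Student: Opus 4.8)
This theorem records classical characterizations of pseudo-Frobenius and quasi-Frobenius rings, so the plan is not to prove anything new but to assemble the standard arguments, following \cite{F2} and \cite{CR}; I sketch the main lines and indicate where the work concentrates.

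For part (1) I would establish the two equivalences $\mathrm{PF}\Leftrightarrow(i)$ and $(i)\Leftrightarrow(ii)$. The heart of $(i)\Rightarrow\mathrm{PF}$ is a trace-ideal argument: a right module $M$ generates $R_R$ exactly when $\mathrm{tr}(M)=\sum_{f\in\Hom(M,R)}\im f=R$, and if $M$ is faithful then the maps $R\to M$, $x\mapsto mx$ ($m\in M$) have zero common kernel, embedding $R_R$ into a product $M^I$. If one had $T=\mathrm{tr}(M)\neq R$, the cogenerator hypothesis would give a nonzero homomorphism $R/T\to R$, i.e.\ a nonzero $a\in R$ with $aT=0$; combining this with the faithfulness of $M$ and the injectivity of $R_R$ produces a contradiction, so $\mathrm{tr}(M)=R$ and $M$ is a generator. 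For $\mathrm{PF}\Rightarrow(i)$ and the structural description $(ii)$, the key input is that a right self-injective ring which is a cogenerator is semiperfect with essential right socle; decomposing $1=e_1+\cdots+e_n$ into orthogonal primitive idempotents writes $R_R=\bigoplus_i e_iR$ as a finite direct sum of indecomposable injective summands, and semiperfectness forces each $\mathrm{soc}(e_iR)$ to be simple and essential. Conversely, such a decomposition makes $R_R$ injective (a finite sum of injectives) and, via the Nakayama permutation matching the tops $e_iR/e_iJ$ with the socles $\mathrm{soc}(e_iR)$, shows every simple right module embeds in $R_R$, i.e.\ $R_R$ is a cogenerator.

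For part (2) I would use the description of QF rings as two-sided artinian rings over which the injective and projective modules coincide. The forward implication is the standard Faith--Walker fact that over a QF ring a module is injective if and only if it is projective, on either side. For the converse, assuming every injective right module is projective, one argues that $R$ is right noetherian (injectives become summands of free modules, and a Bass--Papp--type analysis shows that arbitrary direct sums of injectives stay injective, which characterizes the noetherian condition) and that $R_R$, being injective, is self-injective; these together force $R$ to be two-sided artinian and self-injective, hence QF. The left-handed statement is symmetric.

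The main obstacle in both parts is the same structural passage: extracting semiperfectness, the noetherian/artinian conditions, and the Nakayama permutation data purely from the homological hypotheses, and in particular using the cogenerator property together with injectivity to convert the product embedding $R_R\hookrightarrow M^I$ into genuine generation of $R_R$ by $M$. This is the step where both injectivity and the cogenerator condition are essential and is the least formal, most theory-laden part of the argument.
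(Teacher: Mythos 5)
There is an important mismatch of kind here: the paper does not prove this theorem at all. It is stated as background in the introduction, explicitly quoted from the literature (``see \cite[4.20]{F2}'' for the PF part and \cite{CR} for the QF part), so there is no proof in the paper to compare yours against; assembling the classical arguments, as you set out to do, is the only reasonable response, and your outline does name the right ingredients (Osofsky's structure theory for part (1), Faith--Walker for part (2)). However, two of the bridging steps you offer do not actually bridge, so as a proof the sketch has genuine gaps rather than being a compressed version of a correct argument.

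First, in part (1) the trace-ideal contradiction does not close. A nonzero map $R/T\to R$ of \emph{right} modules gives $0\neq a\in R$ with $aT=0$, i.e.\ $a$ lies in the \emph{left} annihilator of $T$; faithfulness of the right module $M$ gives $Ma\neq 0$, and the cogenerator property then yields $f:M\to R$ with $0\neq f(ma)=f(m)a\in Ta$. So what these hypotheses produce is $Ta\neq 0$, which is perfectly compatible with $aT=0$ for a two-sided ideal $T$: no contradiction follows from the stated facts, and the injectivity of $R_R$ has not entered in any essential way. The standard proof runs differently: one first invokes Osofsky's theorem (reference \cite{O} of this very paper) that a right self-injective right cogenerator is semiperfect with finitely generated essential right socle; then faithfulness embeds $R_R$ into a product $M^I$, the finite length of the socle lets one cut down to finitely many coordinates (the kernels $S\cap\Ker(R\to M^F)$, $F\subseteq I$ finite, form a directed family with zero intersection in a module of finite length, so some $F$ gives an embedding of $S$, and essentiality of $S$ then gives $R\hookrightarrow M^F$), and self-injectivity splits this embedding, exhibiting $R$ as a direct summand of a finite power of $M$, hence $M$ generates. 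Second, in part (2), from ``every injective is projective'' a direct sum of injectives is \emph{projective}, not injective, so the Bass--Papp criterion cannot be applied as you suggest; obtaining the noetherian condition is exactly the hard Faith--Walker step (via Kaplansky's theorem that projectives are direct sums of countably generated modules plus a cardinality argument), and your sketch assumes its conclusion. Since the paper itself only cites this theorem, deferring to \cite{F2} and \cite{CR} is entirely legitimate --- but then the honest form of the write-up is a citation, not an argument whose decisive steps are asserted contradictions that do not hold.
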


Dually, the analogue questions have be raised in the case of coalgebras and comodules over coalgebra. We will refer to \cite{A}, \cite{DNR}, \cite{M} or \cite{S} for various basic definitions and results in the theory coalgebras and their comodules. Recall that a coalgebra over a field $K$ is defined by dualizing the categorical diagrams defining the notion of algebra. That is, a coalgebra is an algebra in the category dual to that of $K$-vector spaces. Thus, a coalgebra $(C,\Delta,\varepsilon)$ has a comultiplication $\Delta:C\rightarrow C\otimes C$ and counit $\varepsilon:C\rightarrow K$ satisfying appropriate co-associativity and co-unit relations. We use Sweedler's sigma notation which writes $\Delta(c)=\sum\limits_{(c)}c_1\otimes c_2$ or the simplified notation with the summation symbol omitted $\Delta(c)=c_1\otimes c_2\in C\otimes C$, and this will always be understood as a summation rather then a single tensor monomial. Then the coassociativity of $C$ writes $c_1\otimes c_{21}\otimes c_{22}=c_{11}\otimes c_{12}\otimes c_2$ and the counit property $\varepsilon(c_1)c_2=c=c_1\varepsilon(c_2)$. A right $C$-comodule $(M,\rho)$ is defined as a vector space with a comultiplication $\rho:M\rightarrow M\otimes C$ and satisfying appropriate compatibility conditions; using a similar convention $\rho(m)=m_0\otimes m_1$, these conditions write $m_{00}\otimes m_{01}\otimes m_1=m_0\otimes m_{11}\otimes m_{12}$ and $m_0\varepsilon(m_1)=m$. The category of right $C$-comodules is be denoted $\Mm^C$, and that of the similarly defined left comodules is ${}^C\Mm$. In analogy, we will use the notation ${}_R\Mm$ and $\Mm_R$ of the categories of left, respectively right $R$-modules over a ring $R$. The dual $C^*$ of a coalgebra is an algebra with the convolution product $(fg)(c)=(f*g)(c)=f(c_1)g(c_2)$ and any right $C$-comodule $M$ is also a left $C^*$-module by the action $c^*\cdot m=c^*(m_1)m_0$, where $c^*\in C^*$, $m\in M$ and $\rho(m)=m_0\otimes m_1\in M\otimes C$.   

\vspace{.4cm}

A coalgebra $C$ over a field $K$ is always a cogenerator for its comodules and is also injective as a comodule over itself. The dual properties in the coalgebra situation, corresponding to the selfinjectivity and the cogenerator properties of a ring (or an algebra), are that of a coalgebra being projective as a right (or left) comodule or being a generator for the right (or left) comodules. These conditions were studied for coalgebras in \cite{NT1} and \cite{NT2}, where QcF (quasi-co-Frobenius) coalgebras were introduced as the dualization of QF-algebras and in some respects of PF rings. It is proved there that 
 
\begin{theorem}\label{t0} 
The following assertions are equivalent for a coalgebra $C$.\\
(i) $C$ embeds in a direct sum of copies of $C^*$ as left $C^*$-modules.\\
(ii) $C$ is a torsionless left $C^*$-modules i.e. $C$ embeds in a direct product of copies of $C^*$.\\
(iii) Every injective right $C$-comodule is projective.\\
(iv) $C$ is a projective right $C$-comodule.\\
(v) $C$ is a projective left $C^*$-module.
\end{theorem}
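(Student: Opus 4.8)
The plan is to establish the five statements by a cycle of implications, leaning on two standing facts about the category $\Mm^C$ of right comodules. First, the functor sending a right comodule to the same space with the $C^*$-action $c^*\cdot m=c^*(m_1)m_0$ identifies $\Mm^C$ with the full subcategory of rational left $C^*$-modules inside ${}_{C^*}\Mm$; this subcategory is closed under subobjects, quotients and arbitrary coproducts, the inclusion $\iota$ is exact, and its right adjoint $\mathrm{Rat}$ (the largest rational submodule) is only left exact. Fullness is the crucial point: a $K$-linear map between comodules is $C^*$-linear exactly when it is colinear. Second, $\Mm^C$ is locally finite with $C$ an injective cogenerator, so every injective right comodule is a coproduct of indecomposable injectives $E(S)$, each of which is a direct summand of $C$; hence every injective right comodule is a direct summand of a coproduct $C^{(I)}$ of copies of $C$.

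The easy implications come first. For (iv)$\Leftrightarrow$(iii): if $C$ is a projective comodule then so is every coproduct $C^{(I)}$ and every direct summand thereof, so by the second standing fact every injective right comodule is projective; conversely $C$ is always injective, so (iii) forces $C$ to be projective. For (v)$\Rightarrow$(iv): given a comodule epimorphism $p\colon M\to N$ and a comodule map $f\colon C\to N$, both are $C^*$-linear, so (v) yields a $C^*$-linear lift $g\colon C\to M$, and by fullness $g$ is automatically colinear, proving $C$ projective in $\Mm^C$. Finally (v)$\Rightarrow$(i) is immediate, since a projective module is a direct summand of a free module $(C^*)^{(I)}$ and hence embeds into it, and (i)$\Rightarrow$(ii) is the trivial inclusion of a coproduct into the corresponding product.

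What remains, and what I expect to be the main obstacle, is to cross back from the ``embedding'' conditions (i),(ii) to the projectivity conditions (iv),(v): concretely, to prove (ii)$\Rightarrow$(iv) and (iv)$\Rightarrow$(v). The difficulty is genuine, because projectivity of the rational module $C$ as a full $C^*$-module is a priori stronger than its projectivity in $\Mm^C$ --- the naive attempt to lift along an arbitrary $C^*$-epimorphism fails precisely because $\mathrm{Rat}$ need not be exact, so it need not carry epimorphisms to epimorphisms. My plan to overcome this is to reduce to the indecomposable injective summands $E(S)$ of $C$, using that $C$ is projective (resp. torsionless) if and only if each $E(S)$ is, and to exploit the adjunction isomorphism $\Hom_{C^*}(M,C^*)\cong(C\otimes_{C^*}M)^*$, which converts the point-separating family of $C^*$-linear maps $C\to C^*$ provided by (ii) into comodule-theoretic splitting data. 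Local finiteness of $\Mm^C$ and the decomposition of $C$ into the $E(S)$ are the tools that let one promote an embedding into a product to a genuine free summand, thereby identifying each $E(S)$ as a projective $C^*$-module and closing the cycle (ii)$\Rightarrow$(iv)$\Rightarrow$(v)$\Rightarrow$(i)$\Rightarrow$(ii).
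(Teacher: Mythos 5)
Your easy implications are all correct: (iii)$\Leftrightarrow$(iv) by writing an arbitrary injective comodule as a direct summand of a coproduct of copies of $C$, (v)$\Rightarrow$(iv) by fullness of the rational subcategory $\Mm^C$ inside ${}_{C^*}\Mm$, and (v)$\Rightarrow$(i)$\Rightarrow$(ii) trivially. But the argument stops exactly where the theorem begins. The two implications that carry all of its content --- (ii)$\Rightarrow$(iv), crossing from torsionless back to projectivity, and (iv)$\Rightarrow$(v), passing from projectivity in $\Mm^C$ to projectivity in the full module category ${}_{C^*}\Mm$ --- are never proved; what you give instead is a list of tools (reduction to the summands $E(S)$, the adjunction $\Hom_{C^*}(M,C^*)\cong(C\otimes_{C^*}M)^*$, local finiteness) together with the assertion that they ``let one promote an embedding into a product to a genuine free summand.'' That promotion \emph{is} the theorem. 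You correctly diagnose why it is hard (${\rm Rat}$ is only left exact, so neither lifting along epimorphisms from non-rational modules nor replacing a product $\prod C^*$ by a coproduct $\bigoplus C^*$ is a formal manipulation), but diagnosing the obstacle is not overcoming it: nothing in your text explains how a point-separating family of $C^*$-linear maps $\phi_i\colon C\to C^*$ produces even a single split embedding $E(S)\hookrightarrow (C^*)^n$, nor why an $E(S)$ that is projective relative to comodule epimorphisms also lifts against a $C^*$-epimorphism $X\to E(S)$ when $X$ is not rational. So the proposal is an honest reduction of the theorem to its hard core, not a proof.

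For the comparison you were asked about: the paper itself contains no proof of this statement --- it is quoted as known from [NT1] and [NT2] (see also [DNR, Chapter 3]), so the standard your proposal must meet is the argument in those references. That argument does begin with ingredients you name, but its essential extra steps are concrete: for instance, one observes that any $C^*$-linear map $C\to C^*$ has image inside the rational part ${\rm Rat}(C^*)$ (a quotient of a rational module is rational), and one then analyzes ${\rm Rat}(C^*)$ and the indecomposable injectives carefully enough to turn separating families of such maps into splittings; a related concrete device appears in the paper's own Proposition 3.1, where finite-dimensional comodules are embedded into finite free $C^*$-modules by dualizing epimorphisms $C^{n}\to E(T)^*$. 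Steps of this kind, executed in detail, are exactly what is missing between your setup and the conclusion.
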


A coalgebra satisfying these equivalent condition is called left QcF. Moreover, if these hold, then $C$ is a generator in ${}^C\Mm$, the category of left $C$-comodules. This concept is not left-right symmetric, unlike the algebra counterpart, the QF-algebras (see \cite[Example 3.3.7]{DNR} and \cite[Example 1.6]{NT1}). It is shown in \cite{NT2} (see also \cite[Theorem 3.3.11]{DNR}) that a coalgebra is left and right QcF if and only if $C^C$ generates right comodules and is projective as right comodule, equivalently ${}^CC$ is a projective generator in ${}^C\Mm$ and these are further equivalent to $C$ being a generator for both $\Mm^C$ and ${}^C\Mm$, characterizations that dualize known characterizations of finite dimensional QF algebras. Other symmetric characterizations which also generalize the characterizations of Frobenius algebras and co-Frobenius coalgebras \cite{I} and strongly motivate the consideration of QF algebras and QcF coalgebras as the generalization of Frobenius algebras are given in \cite{I1}. Though, it remained open whether the fact that $C$ is a generator for ${}^C\Mm$ is actually enough to imply the fact that $C$ is left QcF, i.e. if it implies that $C$ is projective as left $C^*$-module. In fact, the question has been has been studied very recently in \cite{NTvO}, where some partial results are given. Among these, it is shown that the answer to this question is positive in the case $C$ has finite coradical filtration. The general question however is left as an open question.

\vspace{.4cm}

In the case of a ring $R$, there is no implication between the property of being left self-injective and that of $R$ being left cogenerator. An example of a ring $R$ which is a non-injective cogenerator in $M_R$ is the $K$-algebra with basis ${1}\cup\{e_i\mid i=0,1,2\dots\}\cup\{x_i\mid i=0,1,2\dots\}$ with identity $1$ and with $e_ix_j=\delta_{i,j}x_j$, $x_je_i=\delta_{i,i-1}x_j$, $e_ie_j=\delta_{i,j}e_i$ and $x_ix_j=0$ for all $i,j$ - see \cite[24.34.2, p. 215]{F1}. Conversely, a ring $R$ which is a right cogenerator, is right selfinjective if and only if it is semilocal (see again \cite[24.10-24.11]{F1}), and there are selfinjective rings which are not semilocal, and thus they are not right cogenerators. Such an example can even be obtained as a profinite algebra, that is, an algebra which is the dual of a coalgebra - see Example \ref{e2}.

\vspace{.4cm}

We will say that a coalgebra has the right generating condition if it generates all its right comodules. There are two main results in this paper. Firstly, we examine some conditions under which the right generating condition of a coalgebra implies the fact that $C$ is right QcF (projective as right $C^*$-module). Among these, we consider three important conditions in the theory of coalgebras: semiperfect coalgebras, coalgebras of finite coradical filtration and coalgebras of finite dimensional coradical (almost connected). We show that 

\vspace{.2cm}

($*$) \emph{a coalgebra with the right generating condition and whose indecompusable (injective) left components are of finite Loewy length is necessarily right QcF (the converse is known to hold).} 

\vspace{.2cm}

Therefore, for a coalgebra $C$ with the right generating condition, the above is an equivalence, and the coalgebra $C$ being QcF is further equivalent to $C$ being right semiperfect (see \cite{L}). As a consequence, we see that implication ($*$) holds whenever the coalgebra has finite coradical filtration, and this allows us to reobtain the main result of \cite{NTvO} in a direct short way. Secondly, we show that every coalgebra $C$ embeds in a coalgebra $C_\infty$ that has the right generating condition (in fact, $C_\infty$ will even have any of its finite dimensional right comodules as a quotient). Thus, starting with a coalgebra $C$ which is not right semiperfect, we will get a coalgebra $C_\infty$ which is not right semiperfect (see \cite{L}) and thus, by well known properties of coalgebras, $C_\infty$ will not be right QcF. Moreover, if we start with a connected coalgebra (coalgebra having the coradical of dimension 1) over an algebraically closed field, we show that the coalgebra $C_\infty$ can be constructed to be local as well, therefore showing that the third mentioned condition for coalgebras - the coalgebra having finite dimensional coradical - is not enough for the right generating condition to imply the QcF property. 


\section{Loewy series and the Loewy length of modules}

We first recall a few well known facts on the Loewy series of modules. Let $M$ be a module over a ring $R$. We denote $L_0(M)=0$, $L_1(M)=s(M)$ - the socle of $M$, the sum of all the simple submodules of $M$. The Loewy series of $M$ is defined inductively as follows: if $L_n(M)$ is defined, $L_{n+1}(M)$ is such that $L_{n+1}(M)/L_n(M)$ is the socle of $M/L_n(M)$. More generally, if $\alpha$ is an ordinal, and $(L_\beta)_{\beta<\alpha}$ were defined, then\\
$\bullet$ if $\alpha=\beta+1$ is a successor, then one defines $L_{\beta+1}(M)$ such that $L_{\beta+1}(M)/L_\beta(M)=s(M/L_\beta(M))$;\\
$\bullet$ if $\alpha$ is a limit (i.e. not a successor) then one defines $M_\alpha=\bigcup\limits_{\beta<\alpha}M_\beta$.\\
If for some $\alpha$, $M=M_\alpha$ we say that $M$ has its Loewy length defined and the least ordinal $\alpha$ with this property will be called the Loewy length of $M$; we will write $lw(M)=\alpha$. It is known that modules having the Loewy length defined are exactly the semiartinian modules, that is, modules $M$ such that $s(M/N)\neq 0$ for any submodule $N$ of $M$ with $N\neq M$. We refer to \cite{N} as a good source for these facts.\\
We also recall a few well known facts on the Loewy length of modules. Throughout this paper, only modules of finite Loewy length will be used; however these properties hold in general for all modules. In the following, whenever we write $lw(M)$ we understand that this implicitly also means the Loewy length of $M$ is defined (and for our purposes, it will also be enough to assume that $lw(M)$ is finite). 

\begin{proposition}\label{1}
For any ordinal $\alpha$ (or $\alpha$ non-negative integer) we have:\\
(i) If $N$ is a submodule of $M$ then $L_\alpha(N)\leq L_\alpha(M)$ and in fact $L_\alpha(N)=N\cap L_\alpha(M)$.\\
(ii) If $f:N\rightarrow M$ is a morphism of modules, then $f(L_\alpha(N))\subseteq L_\alpha(M)$.\\
(iii) If $N$ is a submodule of $M$ then $lw(N)\leq lw(M)$, $lw(M/N)\leq lw(M)$ and $lw(M)\leq lw(N)+lw(M/N)$.\\
(iv) $L_\alpha(\bigoplus\limits_{i\in I}M_i)=\bigoplus\limits_{i\in I}L_\alpha(M_i)$ and $lw(\bigoplus\limits_{i\in I}M_i)=\sup\limits_{i\in I}\,lw(M_i)$.
\end{proposition}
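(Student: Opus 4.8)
The plan is to prove all four parts by (transfinite) induction on $\alpha$, reducing each to its level-one instance about socles. I would first isolate three elementary socle facts. For a submodule $N\le M$ one has $s(N)=N\cap s(M)$: indeed $N\cap s(M)$ is a submodule of the semisimple module $s(M)$, hence semisimple and contained in $N$, while every simple submodule of $N$ is a simple submodule of $M$. For a morphism $f:N\to M$ one has $f(s(N))\subseteq s(M)$, since the image of a semisimple module is semisimple. Finally $s(\bigoplus_i M_i)=\bigoplus_i s(M_i)$, by the same two inclusions (projecting a simple submodule to each factor). These are exactly the $\alpha=1$ cases of (i), (ii), (iv).

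For (i) I would induct on $\alpha$. In the successor step $\alpha=\beta+1$, the hypothesis $L_\beta(N)=N\cap L_\beta(M)$ identifies, via the isomorphism theorem, $N/L_\beta(N)\cong(N+L_\beta(M))/L_\beta(M)=:\bar N$ as a submodule of $M/L_\beta(M)$; applying the submodule socle fact to $\bar N$ and unwinding the projection $\pi:M\to M/L_\beta(M)$ gives $L_{\beta+1}(N)=N\cap L_{\beta+1}(M)$. The limit step is immediate, as both sides are increasing unions and intersection distributes over unions. Part (ii) runs on the identical template: by the hypothesis $f(L_\beta(N))\subseteq L_\beta(M)$, the map $f$ descends to $\bar f:N/L_\beta(N)\to M/L_\beta(M)$, which carries $s(N/L_\beta(N))=L_{\beta+1}(N)/L_\beta(N)$ into $s(M/L_\beta(M))=L_{\beta+1}(M)/L_\beta(M)$ by the morphism socle fact; the limit step is again a union. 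Part (iv) uses the same induction with the direct-sum socle fact together with $(\bigoplus_i M_i)/(\bigoplus_i L_\beta(M_i))\cong\bigoplus_i(M_i/L_\beta(M_i))$.

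For (iii), the first two inequalities are short consequences: if $M=L_\alpha(M)$ then $N=N\cap L_\alpha(M)=L_\alpha(N)$ by (i), so $lw(N)\le\alpha$; and since $\pi:M\to M/N$ is onto, $M/N=\pi(L_\alpha(M))\subseteq L_\alpha(M/N)$ by (ii), so $lw(M/N)\le\alpha$. The subadditivity $lw(M)\le lw(N)+lw(M/N)$ is where the real work lies, and I expect it to be the main obstacle. It rests on a shift lemma, $L_{\beta+\gamma}(M)/L_\beta(M)=L_\gamma(M/L_\beta(M))$, which I would prove by a separate induction on $\gamma$ with $\beta$ fixed: the successor step is a third-isomorphism-theorem computation, while the limit step requires the ordinal fact that $\{\beta+\delta:\delta<\gamma\}$ is cofinal in the limit ordinal $\beta+\gamma$, so that the union defining $L_{\beta+\gamma}(M)$ matches the one defining $L_\gamma$ of the quotient. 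Granting this, put $\beta=lw(N)$ and $\gamma=lw(M/N)$; from (i) and $L_\beta(N)=N$ one gets $N\subseteq L_\beta(M)$, so $M/L_\beta(M)$ is a quotient of $M/N$ and hence has Loewy length $\le\gamma$ by the quotient inequality just proved, and the shift lemma then yields $L_{\beta+\gamma}(M)=M$.

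The length formula in (iv) follows formally: with $\alpha=\sup_i lw(M_i)$ each $L_\alpha(M_i)=M_i$, so $L_\alpha(\bigoplus_i M_i)=\bigoplus_i M_i$ by the displayed identity, giving $lw(\bigoplus_i M_i)\le\alpha$; the reverse inequality is immediate from (iii) since each $M_i$ is a direct summand. I would remark that since only finite Loewy lengths are needed in the paper, one may let $\alpha$ range over $\NN$ throughout, replacing every transfinite induction by ordinary induction and eliminating the cofinality point in the shift lemma; the limit-ordinal bookkeeping is the only place where the general statement costs more than the finite one.
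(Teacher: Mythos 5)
Your proof is correct, and every step checks out: the three socle facts are right, the successor step of (i) via the second isomorphism theorem $N/L_\beta(N)\cong (N+L_\beta(M))/L_\beta(M)$ unwinds exactly as you say, and the shift lemma $L_{\beta+\gamma}(M)/L_\beta(M)=L_\gamma(M/L_\beta(M))$ with its cofinality point at limit ordinals is precisely what subadditivity in (iii) needs (note that you correctly keep $lw(N)$ as the left summand, which matters since ordinal addition is not commutative). The main thing to know by way of comparison is that the paper offers no proof of this proposition at all: it is presented as a recollection of ``well known facts'' on Loewy series, with the reader referred to the monograph [N] of N\u{a}st\u{a}sescu, so there is no internal argument to measure yours against. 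What your write-up buys is a self-contained verification of exactly the statements the paper takes on faith, organized the way a standard reference would organize it (reduce (i), (ii), (iv) to socle facts by transfinite induction; get the two easy inequalities of (iii) from (i) and (ii); isolate the shift lemma for subadditivity; deduce the length formula in (iv) formally). Your closing remark also matches the paper's own usage: the paper states explicitly that only modules of finite Loewy length are used in the sequel, so the purely finite induction, with the limit-ordinal and cofinality bookkeeping stripped out, would suffice for all of its applications.
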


Let $(C,\Delta,\varepsilon)$ be a coalgebra over an arbitrary field $K$, $A=C^*$ and let $M$ be a right $C$-comodule with comultiplication $\rho:M\rightarrow M\otimes C$. It is well known that $M$ has its Loewy length defined and in fact $lw(M)\leq \omega_0$, the first infinite ordinal. The coradical filtration of $C$ is defined by $C_0=L_1(C)$, ... $C_n=L_{n+1}(C)$. Let $J=J(A)$, the Jacobson radical of $A$; by \cite[Proposition 2.5.3]{DNR} we have $(J^n)^\perp=C_{n-1}$, where for $I<A$, $I^\perp=\{c\in C|f(c)=0,\,\forall f\in I\}$ and for $X\subseteq C$, $X^\perp=\{c^*\in C^*|c^*(x)=0,\,\forall x\in X\}$. Also, if $M$ is a right $C$-comodule (so a left $A$-module), $M^*$ becomes a right $A$-module in the usual way by the "dual action" $(m^*\cdot a)(m)=m^*(am)$, $m^*\in M^*,\,m\in M,\,a\in A$. The following Lemma gives the connection between the Loewy length of $M$ and $M^*$ and also provides a way to compute it for comodules of finite Loewy length.

\begin{lemma}\label{2}
Let $(M,\rho)$ be a right $C$-comodule. Then the following are equivalent:\\
(i) $J^n\cdot M=0$.\\
(ii) $M^*\cdot J^n=0$.\\
(iii) ${\rm Im} \rho\subseteq M\otimes C_{n-1}$.\\
(iv) $lw(M)\leq n$.
\end{lemma}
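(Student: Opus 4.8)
The plan is to take condition (i) as the central hub: I would first prove the purely formal equivalences (i)$\Leftrightarrow$(iii) and (i)$\Leftrightarrow$(ii), and then establish (i)$\Leftrightarrow$(iv) by an induction on $n$ along the Loewy filtration. Throughout I use that the left $A$-action is $a\cdot m=a(m_1)m_0=({\rm id}_M\otimes a)(\rho(m))$, and the relation $(J^n)^\perp=C_{n-1}$ recalled above.

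For (i)$\Leftrightarrow$(iii), the key observation is that for a fixed $m$, the element $\rho(m)$ lies in $M\otimes C_{n-1}=M\otimes(J^n)^\perp$ if and only if $({\rm id}_M\otimes a)(\rho(m))=a\cdot m=0$ for every $a\in J^n$. This rests on the elementary fact that for subspaces over a field one has $M\otimes\bigcap_{a\in J^n}\Ker a=\bigcap_{a\in J^n}\Ker({\rm id}_M\otimes a)$ inside $M\otimes C$, since $-\otimes_K-$ is exact; quantifying over all $m$ identifies $\im\rho\subseteq M\otimes C_{n-1}$ with $J^n\cdot M=0$. The equivalence (i)$\Leftrightarrow$(ii) is a duality: the condition $M^*\cdot J^n=0$ unravels to $m^*(a\cdot m)=0$ for all $m^*\in M^*$, $a\in J^n$, $m\in M$, and since $M^*$ separates the points of the vector space $M$, this holds exactly when $a\cdot m=0$ for all such $a,m$, i.e. when $J^n\cdot M=0$.

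The heart of the argument is (i)$\Leftrightarrow$(iv), which I would prove by induction on $n$. The base case $n=1$ is the statement that the socle of $M$ equals the annihilator $\{m\in M\mid J\cdot m=0\}$; by (iii) this annihilator is $\{m\mid\rho(m)\in M\otimes C_0\}$, a subcomodule which is a comodule over the cosemisimple coradical $C_0$, hence semisimple, and which contains every simple subcomodule, so it is exactly $s(M)$. For (iv)$\Rightarrow$(i), each layer $L_k(M)/L_{k-1}(M)=s(M/L_{k-1}(M))$ is semisimple so $J$ kills it, giving $J\cdot L_k(M)\subseteq L_{k-1}(M)$; iterating, $lw(M)\leq n$ forces $J^n\cdot M=J^n\cdot L_n(M)\subseteq L_0(M)=0$. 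For (i)$\Rightarrow$(iv), from $J^n\cdot M=0$ we obtain $J\cdot(J^{n-1}M)=0$, so $J^{n-1}M\subseteq s(M)$ by the base case, hence $J^{n-1}$ annihilates $M/s(M)$; the inductive hypothesis then yields $lw(M/s(M))\leq n-1$, and Proposition~\ref{1}(iii) gives $lw(M)\leq lw(s(M))+lw(M/s(M))\leq 1+(n-1)=n$.

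The one genuinely non-formal step is the base case $n=1$, namely identifying $s(M)$ with $\{m\mid\rho(m)\in M\otimes C_0\}$ and hence with $\{m\mid J\cdot m=0\}$; this is where the cosemisimplicity of $C_0$ enters. Everything else reduces to bookkeeping with the perp operation, the flatness of $-\otimes_K-$, and the standard Loewy-length inequalities of Proposition~\ref{1}.
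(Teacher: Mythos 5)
Your argument is correct, but it links condition (iv) to the others by a genuinely different mechanism than the paper. Your treatment of (i)$\Leftrightarrow$(ii) (separation of points by $M^*$) and of (i)$\Leftrightarrow$(iii) (pointwise perp calculus with $(J^n)^\perp=C_{n-1}$, writing $\rho(m)$ with linearly independent first tensorands) is essentially identical to the paper's. The divergence is in how (iv) enters: the paper proves (iii)$\Leftrightarrow$(iv) directly and structurally, by observing that $\rho:M\rightarrow M\otimes C$ is an \emph{injective morphism of comodules}, that $M\otimes C\simeq\bigoplus_{i\in I}C$ for $I$ a basis of $M$, and hence by Proposition \ref{1}(iv) that $L_n(M\otimes C)=M\otimes C_{n-1}$; then $lw(M)=lw(\rho(M))\leq n$ iff $\rho(M)\subseteq L_n(M\otimes C)=M\otimes C_{n-1}$, with no induction and no input beyond Proposition \ref{1} and the perp relation. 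You instead prove (i)$\Leftrightarrow$(iv) by induction on $n$, in effect re-establishing in this setting the classical module-theoretic fact that the Loewy filtration of $M$ coincides with the annihilator filtration of the powers of $J$: your base case identifies $s(M)$ with $\{m\mid J\cdot m=0\}$ via cosemisimplicity of $C_0$, the step (iv)$\Rightarrow$(i) uses that $J$ kills each semisimple layer, and (i)$\Rightarrow$(iv) uses the inequality $lw(M)\leq lw(N)+lw(M/N)$ of Proposition \ref{1}(iii). Both routes are sound. The paper's is shorter and stays entirely at the comodule level; yours needs a few additional (standard) facts that the paper's route avoids and that you use implicitly: that the Jacobson radical of $C^*$ annihilates every simple rational module, that the annihilator of the two-sided ideal $J$ in a rational module is a subcomodule, that submodule socle and subcomodule socle agree for rational modules, and that comodules over the coradical $C_0$ are semisimple. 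In exchange, your argument makes explicit the radical-filtration picture ($J\cdot L_k(M)\subseteq L_{k-1}(M)$), which is a useful structural statement in its own right and is closer in spirit to the ring-theoretic treatment of Loewy length.
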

\begin{proof}
(i)$\Rightarrow$(ii) is straightforward.\\
(ii)$\Rightarrow$(i) If $f\in J^n$ and $m\in M$, then for all $m^*\in M^*$, $0=(m^*\cdot f)(m)=m^*(f\cdot m)$. Since this is true for all $m^*\in M^*$, we get $f\cdot m=0$.\\
(iii)$\Leftrightarrow$(iv) The map $\rho:M\rightarrow M\otimes C$ is a morphism of $C$-comodules by the coassociativity property. Moreover, $M\otimes C\simeq \bigoplus\limits_{i\in I}C$, where $I$ is a $K$-basis of $M$. Since $C_{n-1}=L_n(C)$, using these isomorphisms, we have that $L_n(\bigoplus\limits_{i\in I}C)=\bigoplus\limits_{i\in I}C_{n-1}$ and so $L_n(M\otimes C)=M\otimes C_{n-1}$. Therefore, if (iii) holds, since $\rho$ is also injective (by the counit property) we get $lw(M)=lw(\rho(M))\leq lw(M\otimes C_{n-1})=n$; conversely, if (iv) holds, then $M=L_k(M)$ for some $k\leq n$ so $\rho(M)\subseteq L_k(M\otimes C)\subseteq L_n(M\otimes C)=M\otimes C_{n-1}$.\\
(i)$\Rightarrow$(iii) For $m\in M$, let $\rho(m)=\sum\limits_{i=1}^km_i\otimes c_i\in M\otimes C$ and by a standard linear algebra observation we can choose the $m_i$'s to be linearly independent. For all $f\in J^n$, $0=f\cdot m=\sum\limits_{i=1}^kf(c_i)m_i$ and thus $f(c_i)=0,\,\forall i$, i.e. $c_i\in (J^n)^\perp=C_{n-1}$ for all $i=\overline{1,k}$. \\
(iii)$\Rightarrow$(i) is true, since $J^n\subseteq (J^n{}^\perp)^\perp=C_{n-1}^\perp$.
\end{proof}

Since the dual of a finite dimensional right $C$-comodule is a finite dimensional left $C$-comodule, we have

\begin{corollary}\label{3}
If $M$ is a finite dimensional right $C$-comodule (rational left $C^*$-module), then $M^*\in {}^C\Mm$ and $lw(M)=lw(M^*)$.
\end{corollary}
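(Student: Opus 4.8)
The plan is to obtain the Loewy length equality as a formal consequence of Lemma~\ref{2} applied to $M$ together with its left-handed mirror applied to $M^*$, so that essentially no new computation is needed. Before that I would make explicit the (standard) comodule structure used: dualizing $\rho\colon M\to M\otimes C$ and using that $M$ is finite dimensional yields a map $\lambda\colon M^*\to C\otimes M^*$ giving $M^*$ the structure of a left $C$-comodule. Working in a basis $\{m_i\}$ of $M$ with $\rho(m_i)=\sum_j m_j\otimes c_{ji}$, one checks that the right $A$-action induced by $\lambda$ agrees with the dual action $(m^*\cdot a)(m)=m^*(am)$ appearing in Lemma~\ref{2}(ii). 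Since $M^*$ is finite dimensional it is a rational right $A$-module, its subcomodules coincide with its $A$-submodules, and the socle is preserved; hence $lw(M^*)$ is unambiguous, whether computed in ${}^C\Mm$ or as a right $A$-module.

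Next I would record two chains of equivalences. Lemma~\ref{2} applied to the right comodule $M$ gives, for every non-negative integer $n$,
$$lw(M)\le n \iff J^n\cdot M=0 \iff M^*\cdot J^n=0,$$
which are precisely (iv)$\Leftrightarrow$(i)$\Leftrightarrow$(ii). For $M^*$ I would invoke the left-handed analogue of Lemma~\ref{2}: for a left $C$-comodule $N$, viewed as a right $A$-module, one has $lw(N)\le n\iff N\cdot J^n=0$. This is proved by repeating verbatim the argument of Lemma~\ref{2}, the only ingredients being that the coradical filtration of $C$ is two-sided and that the identity $(J^n)^\perp=C_{n-1}$, hence $J^n\subseteq C_{n-1}^\perp$, makes no reference to a side; concretely, $\lambda(N)\subseteq C_{n-1}\otimes N$ exactly when $N\cdot J^n=0$, and $L_n(C\otimes N)=C_{n-1}\otimes N$ by the same splitting $C\otimes N\simeq\bigoplus_i C$ over a basis of $N$. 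Applying this to $N=M^*$ yields $M^*\cdot J^n=0 \iff lw(M^*)\le n$. Combining with the display above, $lw(M)\le n$ if and only if $lw(M^*)\le n$ for every $n$; since $M$ is finite dimensional both Loewy lengths are finite, and therefore $lw(M)=lw(M^*)$.

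The only point requiring genuine care — and the step I expect to be the main obstacle — is the left-handed version of Lemma~\ref{2}: one must check that the passage from $M$ to its dual switches sides consistently (right comodule to left comodule, left $A$-module to right $A$-module), and that the socle/radical bookkeeping for the right $A$-module $M^*$ goes through symmetrically, using only that $(J^n)^\perp=C_{n-1}$ is side-independent. A conceptually cleaner but less self-contained alternative would be to invoke the standard finite-dimensional duality identifying the socle series of $M^*$ with the radical series of $M$, so that $lw(M^*)\le n\iff J^nM=0$ directly; I would prefer the mirror of Lemma~\ref{2} precisely because it stays inside the framework already set up in the paper. Everything else is formal manipulation of the equivalences established in Lemma~\ref{2}.
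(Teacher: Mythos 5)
Your proof is correct and matches the paper's intended argument: the paper derives Corollary~\ref{3} in exactly this way, viewing $M^*$ as a finite dimensional left $C$-comodule and combining the equivalence (ii)$\Leftrightarrow$(iv) of Lemma~\ref{2} with its left-hand mirror (which the paper also invokes later, in the proof of Proposition~\ref{4}). Your extra care about the compatibility of the dual $A$-action with the comodule structure on $M^*$ and the side-independence of $(J^n)^\perp=C_{n-1}$ simply makes explicit what the paper leaves implicit.
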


\section{The generating condition}


Let $\Ss$ (respectively $\Tt$) denote a system of representatives of simple left (respectively right) $C$-comodules. Then $C\simeq \bigoplus\limits_{S\in \Ss}E(S)^{n(S)}$ as left $C$-comodules, with $n(S)$ positive integers and $E(S)$ the injective envelopes of the comodule $S$. Similarly, $C=\bigoplus\limits_{T\in\Tt}E(T)^{p(T)}$ as right $C$-comodules. The we obviously have that $C$ generates all the right $C$-comodules if and only if $(E(T))_{T\in \Tt}$ is a system of generators. Recall that a coalgebra $C$ is right (left) semiperfect if and only if the $E(S)$'s are finite dimensional for all $S\in \Ss$ (resp. the $E(T)$'s $T\in\Tt$ are finite dimensional; see \cite{L} or \cite[Chapter 3]{DNR}). We first give a simple proposition that explains what is the property that coalgebras with the generating condition are missing to be QcF. 

\begin{proposition}\label{3a}
Let $C$ be a coalgebra. Then $C$ is left QcF if and only if $C$ is left semiperfect and generates its left comodules.
\end{proposition}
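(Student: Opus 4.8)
The plan is to phrase everything through condition (v) of Theorem \ref{t0}: $C$ is left QcF exactly when $C$ is projective as a left $C^*$-module. Decomposing $C=\bigoplus_{T\in\Tt}E(T)^{p(T)}$ in $\Mm^C$, i.e. as a left $C^*$-module, and using that in the genuine module category ${}_{C^*}\Mm$ a direct sum of projectives is projective and a summand of a projective is projective, this condition is equivalent to: every indecomposable injective right comodule $E(T)$ is projective as a left $C^*$-module. So the whole proposition reduces to matching this projectivity of the $E(T)$ against the two hypotheses on the right-hand side.

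For the direct implication assume $C$ is left QcF. That $C$ generates ${}^C\Mm$ is precisely the ``moreover'' statement recorded right after Theorem \ref{t0}, so nothing is needed there. It remains to see that $C$ is left semiperfect, i.e. that each $E(T)$ is finite dimensional. Knowing only that $E(T)$ is an indecomposable injective right comodule which is $C^*$-projective, this is the one genuinely non-formal point; I would deduce it from the structure theory of QcF coalgebras --- a (left) QcF coalgebra is (left) semiperfect, cf. \cite{L} and \cite{NT1} --- and I expect this to be the main obstacle of the two directions, since projectivity alone does not visibly bound dimensions.

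For the converse assume $C$ is left semiperfect and generates ${}^C\Mm$, and fix a simple right comodule $T$. Left semiperfectness gives that $E(T)$ is finite dimensional, so by Corollary \ref{3} its $K$-dual $E(T)^*$ is a finite dimensional left comodule; since $(-)^*$ is an exact contravariant duality between the finite dimensional right and left comodules, it sends the indecomposable injective $E(T)$ to a finite dimensional indecomposable \emph{projective} left comodule (with top $T^*$). A finite dimensional comodule that is projective among finite dimensional comodules is projective in the whole category, so $E(T)^*$ is projective in ${}^C\Mm$. Now invoke generation: an epimorphism $C^{(J)}\twoheadrightarrow E(T)^*$ splits, exhibiting $E(T)^*$ as a direct summand of a direct sum of copies of the indecomposable injectives $E(S)$, $S\in\Ss$. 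As $E(T)^*$ is indecomposable and the $E(S)$ have local endomorphism rings, Krull--Schmidt--Azumaya forces $E(T)^*\cong E(S)$ for some $S\in\Ss$; in particular $E(T)^*$ is \emph{injective} in ${}^C\Mm$. Dualizing back, $E(T)\cong\big(E(T)^*\big)^*\cong E(S)^*$ is the $K$-dual of a finite dimensional injective left comodule, hence projective in $\Mm^C$.

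It only remains to upgrade ``projective in $\Mm^C$'' to ``projective as a left $C^*$-module''. Here I would use that left semiperfectness makes the rational functor ${\rm Rat}\colon {}_{C^*}\Mm\to{}_{C^*}\Mm$ exact (cf. \cite{L}, \cite{DNR}); since any $C^*$-homomorphism out of the rational module $E(T)$ lands in the rational part of its target, $\Hom_{C^*}(E(T),-)\cong\Hom_{\Mm^C}(E(T),{\rm Rat}(-))$ is then a composite of exact functors, so $E(T)$ is $C^*$-projective. Feeding this back into the reduction of the first paragraph, $C=\bigoplus_T E(T)^{p(T)}$ is projective as a left $C^*$-module, i.e. $C$ is left QcF. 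The delicate points to double-check are the exactness of ${\rm Rat}$ under left semiperfectness and the applicability of Krull--Schmidt--Azumaya to summands of infinite direct sums of the $E(S)$; both are standard but are where the argument could silently go wrong.
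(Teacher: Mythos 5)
Your reduction in the first paragraph and the body of the converse are essentially sound, but the final step rests on a false lemma. You claim that \emph{left} semiperfectness makes the rational functor ${\rm Rat}\colon{}_{C^*}\Mm\to{}_{C^*}\Mm$ exact. The exactness of this functor (whose torsion class is $\Mm^C$, the rational left $C^*$-modules) is equivalent to $C$ being \emph{right} semiperfect, not left (see \cite[Section 3.2]{DNR}); it is not a consequence of left semiperfectness. A concrete counterexample: let $C$ have basis $\{y\}\cup\{x_n\}\cup\{a_n\}$ with $y,x_n$ grouplike and $\Delta(a_n)=x_n\otimes a_n+a_n\otimes y$ (up to sides, this is the coalgebra of \cite[Example 3.2.8]{DNR}, referenced in Example \ref{e2} of the paper). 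As a right comodule $C=Ky\oplus\bigoplus_n{\rm span}\{x_n,a_n\}$, so all injective envelopes of simple right comodules are finite dimensional and $C$ is left semiperfect; but if $f\in C^*$ has $f(y)\neq 0$, then $a_m^* * f=f(y)a_m^*$ for every $m$, so the cyclic left $C^*$-module $C^* f$ is infinite dimensional and hence not rational. Thus every $f\in{\rm Rat}({}_{C^*}C^*)$ vanishes on $y$, and the $C^*$-epimorphism $C^*\to Ky$, $f\mapsto f(y)y$, kills ${\rm Rat}(C^*)$: the left rational functor is not exact, even though $C$ is left semiperfect. So your last paragraph does not establish $C^*$-projectivity of $E(T)$ as written.

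Fortunately that step is unnecessary: Theorem \ref{t0} already records (iv)$\Leftrightarrow$(v), i.e.\ $C$ is projective as a right $C$-comodule if and only if it is projective as a left $C^*$-module. Your argument up to that point shows each $E(T)$ is projective in $\Mm^C$, hence so is $C=\bigoplus_{T\in\Tt}E(T)^{p(T)}$, which is exactly condition (iv); citing Theorem \ref{t0} finishes the proof with no rational functor involved. With this repair your converse is a genuinely different, valid route from the paper's: you pass through the finite-dimensional duality to make $E(T)^*$ projective in ${}^C\Mm$, split an epimorphism $C^{(J)}\twoheadrightarrow E(T)^*$, and invoke Krull--Schmidt--Azumaya to identify $E(T)^*\simeq E(S)$ (this use is legitimate: $E(T)^*$ is finite dimensional, hence a summand of a finite subsum of copies of the $E(S)$, whose endomorphism rings are local), then dualize back. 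The paper argues more economically: generation plus finite dimensionality of $E(T)^*$ gives a finite epimorphism $\phi_T:C^{n_T}\to E(T)^*$ of right $C^*$-modules, whose dual is an embedding $E(T)\simeq(E(T)^*)^*\hookrightarrow(C^*)^{n_T}$ of left $C^*$-modules; summing over $T$ embeds $C$ into a direct sum of copies of $C^*$, which is condition (i) of Theorem \ref{t0}. That route avoids Azumaya, avoids the ``projective relative to finite-dimensional comodules implies projective'' upgrade, and never touches the rational functor.
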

\begin{proof}
"$\Rightarrow$" is already known (see \cite[Chapter 3]{DNR}).\\
"$\Leftarrow$" It is well known that we have $C=\bigoplus\limits_{i\in I}E(T_i)$ a direct sum of right comodules (left $C^*$-modules), where $T_i$ are simple comodules, $C_0=\bigoplus\limits_{i\in I}T_i$ is the coradical of $C$ and $E(T_i)$ are injective envelopes of $T_i$ contained in $C$. For each $T_i$, $E(T_i)$ is finite dimensional, so $E(T_i)^*$ is a finite dimensional right $C^*$-modules which is rational, that is, it has a left $C$-comodule structure. Then there is an epimorphism of right $C^*$-modules $\phi_i:C^{n_i}\rightarrow E(T_i)^*\rightarrow 0$, where $n_i$ can be taken to be a (finite) number since $E(T_i)$ is finite dimensional. By duality, this gives rise to a morphism $\psi_i:E(T_i)\simeq (E(T_i)^*)^*\rightarrow (C^*)^{n_i}$ (given by $\psi_i(x)(c)=\phi_i(c)(x)$). Since $\phi_i$ is a surjective morphism of right $C^*$modules, it is easy to see that $\psi_i$ is an injective morphism of left $C^*$-modules. We then get a monomorphism of left $C^*$-modules $\bigoplus\limits_{i\in I}\psi_i:\bigoplus\limits_{i\in I}E(T_i)\hookrightarrow \bigoplus\limits_{i\in I}(C^*)^{n_i}$, a coproduct power of $C^*$, so $C$ is left QcF.
\end{proof}

The next proposition will be the key step in proving the main results of this section.

\begin{proposition}\label{4}
Suppose $C$ generates $\Mm^C$. If $S\in \Ss$ is such that $lw(E(S))=n$, then for each finite dimensional subcomodule $N$ of $E(S)$ with $lw(N)=n$, there is $T\in\Tt$ such that $N\simeq E(T)^*$.
\end{proposition}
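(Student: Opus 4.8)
The plan is to dualize the conclusion, reduce it to an injectivity statement, produce the relevant map from the generating hypothesis, and concentrate the real work in a single Loewy-length estimate at the end.

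First I would record the internal structure of $N$. Since $N\subseteq E(S)$ and the socle $S=s(E(S))$ is essential in $E(S)$, Proposition \ref{1}(i) gives $s(N)=N\cap S=S$, so $N$ has simple socle and is in particular indecomposable. Passing to duals, Corollary \ref{3} shows that $N^*$ is a finite dimensional right $C$-comodule with $lw(N^*)=lw(N)=n$; since the duality of finite dimensional comodules is exact and interchanges socle and top, $N^*$ has simple top $(s(N))^*=S^*$, hence is local, hence indecomposable, so its socle is simple. Set $T:=s(N^*)\in\Tt$, so that $N^*\hookrightarrow E(T)$ is an essential extension in $\Mm^C$. The key reduction is that $N\simeq E(T)^*$ is equivalent to $N^*\simeq E(T)$, i.e. to $N^*$ being an injective right comodule (equivalently $E(T)$ finite dimensional and equal to $N^*$); so the entire problem is to prove that $N^*$ is injective in $\Mm^C$.

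Next I would bring in the generating condition. As $N^*$ is finite dimensional it is finitely generated, so from $C=\bigoplus_{T\in\Tt}E(T)^{p(T)}$ and the hypothesis there is an epimorphism $\bigoplus_{i=1}^{k}E(T_i)\twoheadrightarrow N^*$ with each $E(T_i)$ an indecomposable injective summand of $C$. Because $N^*$ is local, composing with $N^*\to N^*/\mathrm{rad}\,N^*=S^*$ and using that this last module is simple, one summand must already surject onto $S^*$, and then (its image not lying in the unique maximal subcomodule $\mathrm{rad}\,N^*$) onto all of $N^*$. This yields a single indecomposable injective $E(T')$, a direct summand of $C$, together with an epimorphism $p:E(T')\twoheadrightarrow N^*$. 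I would then attempt to upgrade $p$ to an isomorphism by a socle dichotomy: $p$ carries $s(E(T'))=T'$ into $s(N^*)=T$, so either $p$ is nonzero on $T'$ or it kills $T'$. In the first case $p|_{T'}$ is a monomorphism of simples, so $\ker p\cap s(E(T'))=0$; as the socle is essential in the injective envelope $E(T')$ this forces $\ker p=0$, whence $p$ is an isomorphism, $T'\simeq T$, and $N^*\simeq E(T')$ is injective, exactly as wanted. If instead $p$ vanishes on $s(E(T'))$, then $s(E(T'))\subseteq\ker p$, so $p$ factors through $E(T')/s(E(T'))$; since the Loewy length drops by exactly one upon killing the socle (Proposition \ref{1}), this gives $n=lw(N^*)\le lw(E(T'))-1$, that is $lw(E(T'))\ge n+1$.

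The main obstacle is precisely to exclude this degenerate case: I must rule out that a generating surjection onto $N^*$ can annihilate the socle of the injective it comes from, equivalently that the indecomposable injective relevant to $N$ has Loewy length $>n$. This is where the hypothesis that $E(S)$ has \emph{finite} Loewy length $n$, together with the maximality $lw(N)=n=lw(E(S))$ (so that $N$ reaches the top Loewy layer of $E(S)$), has to be used. Concretely I would dualize $p$ to the monomorphism $N\hookrightarrow E(T')^*$ (landing in the rational part) and combine the equality $lw(N)=n$ with the monotonicity and subadditivity of $lw$ in Proposition \ref{1}(iii) and the duality bookkeeping of Corollary \ref{3} to contradict $lw(E(T'))\ge n+1$; equivalently, one shows directly that $N^*=E(T)$ admits no proper essential extension in $\Mm^C$. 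Establishing this Loewy-length bound for a \emph{right} injective from the \emph{left}-hand datum $lw(E(S))=n$ is the delicate point on which the proposition turns, since a priori the generating injective $E(T')$ need not be finite dimensional; once it is in place, the socle dichotomy closes the argument and delivers $N\simeq E(T)^*$.
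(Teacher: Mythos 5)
Your opening moves (locality of $N^*$, refining the generating epimorphism to a single $\varphi:E(T')\twoheadrightarrow N^*$, and the socle dichotomy) do track the paper's proof, but the argument stops exactly where the real work begins: you yourself flag ruling out the case where $\varphi$ kills $s(E(T'))=T'$ as ``the delicate point,'' and the method you sketch for it does not close. Dualizing $\varphi$ to an embedding $N\hookrightarrow E(T')^*$ gives nothing: $E(T')$ may be infinite dimensional, so $E(T')^*$ need not be a comodule and Corollary \ref{3} does not apply to it; and even formally, an embedding of a comodule of Loewy length $n$ into (the rational part of) $E(T')^*$ is perfectly compatible with $lw(E(T'))\geq n+1$, since submodules may have strictly smaller Loewy length. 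No contradiction can be extracted from the monotonicity and subadditivity of Proposition \ref{1} alone, so the degenerate case remains open and the proposition is not proved. (A secondary slip: ``local, hence indecomposable, so its socle is simple'' is a non sequitur --- indecomposable finite dimensional comodules can have non-simple socle; simplicity of $s(N^*)$ only comes out a posteriori, once $N^*\simeq E(T)$ is established.)

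The missing idea, which is how the paper transfers the left-hand datum $lw(E(S))=n$ to the right side, is a projective lifting: since $E(S)$ is a direct summand of $C$ as a left comodule, $E(S)^*$ is a direct summand of $C^*=A$ as a left $A$-module, hence projective, so the surjective restriction map $r:E(S)^*\rightarrow N^*$ (dual to $N\subseteq E(S)$) lifts through $\varphi$ to a map $p:E(S)^*\rightarrow E(T)$. Its image $P={\rm Im}(p)$ is cyclic and rational, hence finite dimensional, and by Lemma \ref{2} we have $J^n\cdot E(S)^*=0$, whence $J^n\cdot P=0$, i.e. $lw(P)\leq n$. Now your dichotomy argument works, but with $P$ in place of $E(T')$: if $\ker(\varphi\vert_P)\neq 0$, essentiality of $T$ in $E(T)$ forces $T\subseteq\ker(\varphi\vert_P)$, so $\varphi\vert_P$ factors through $P/L_1(P)$, which has Loewy length $<lw(P)\leq n$, yet still surjects onto $N^*$ of Loewy length exactly $n$ (Corollary \ref{3}) --- a contradiction. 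Hence $\varphi\vert_P:P\rightarrow N^*$ is an isomorphism, the inclusion $P\hookrightarrow E(T)$ splits, and indecomposability of $E(T)$ gives $P=E(T)\simeq N^*$, so $N\simeq E(T)^*$. Without this lifting (or some substitute producing a finite dimensional subcomodule of $E(T')$ of Loewy length $\leq n$ that maps onto $N^*$), the generating hypothesis and Loewy-length bookkeeping on $N$ alone cannot exclude your degenerate case.
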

\begin{proof}
Note that since $N$ has simple socle, $N^*$ is a right $C$-comodule which is local, say with a unique maximal subcomodule $X$. This is due to the duality $X\mapsto X^*$ between finite dimensional left and finite dimensional right $C$-comodules. Let $\bigoplus\limits_{i\in I}E(T_i)\stackrel{\varphi}{\rightarrow}N^*\rightarrow 0$ be an epimorphism in $M^C$; then $\exists\,i\in I$ such that $\varphi(E(T_i))\subsetneq X$, and then (for example by Nakayama lemma) we have $\varphi(E(T_i))=N^*$. Put $T=T_i$. We have a diagram of left $A$-modules
$$\xymatrix{
& E(S)^*\ar[d]^r\ar@{.>}[dl]_p & \\
E(T)\ar[r]_\varphi & N^* \ar[r] & 0
}$$
which is completed commutatively by a morphism $p$, since $E(S)^*$ is a direct summand in $C^*=A$ (the vertical map is the natural one). Let $P={\rm Im}(p)$; by (the left hand side version of) Lemma \ref{2}, $J^n\cdot E(S)^*=0$ and so $J^n\cdot P=p(J^n\cdot E(S)^*)=0$. But $P$ is finitely generated  (even cyclic, since $E(S)^*$ is so), and $P$ is also a right $C$-comodule (rational left $C^*$-module), and therefore it is finite dimensional. Thus its Loewy length is defined and $lw(P)\leq n$ by the same Lemma. Also, $\varphi\vert_P$ is injective. Indeed, otherwise $T\subseteq \ker \varphi \cap P=\ker(\varphi\vert_P)(\neq 0)$, since $T$ is essential in $E(T)$. Then $T=L_1(E(T))=L_1(P)$ and so $lw(P/T)=lw(P/L_1(P))<lw(P)\leq n$ (by the definition of Loewy length). But $\varphi$ factors to $\varphi:P/T\rightarrow N^*$ and therefore, using also Corollary \ref{3}, $lw(P/T)\geq lw(N^*)=n$ - a contradiction.\\
Since $\varphi\circ p=r$ is surjective, $\varphi\vert_P$ is an isomorphism with inverse $\theta$. This shows that the inclusion $\iota:P\hookrightarrow E(T)$ splits off ($\theta\circ\varphi\circ\iota=\theta\circ\varphi_P={\rm id}_p$) and since $E(T)$ is indecomposable, $P=E(T)$. Hence $\varphi$ is an isomorphism and $E(T)\simeq N^*$, so $N\simeq E(T)^*$ since they are finite dimensional.
\end{proof}

\begin{proposition}\label{5}
Suppose $C$ satisfies the right generating condition. Then for each $S\in\Ss$ such that $E(S)$ has finite Loewy length, there exists $T\in \Tt$ such that $E(S)\simeq E(T)^*$ and $E(S)$ is finite dimensional.
\end{proposition}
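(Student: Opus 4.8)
The plan is to leverage Proposition~\ref{4} together with the structure of the Loewy filtration to upgrade the local statement (every top--Loewy--length finite dimensional subcomodule of $E(S)$ is of the form $E(T)^*$) into the global statement that $E(S)$ itself is finite dimensional and of this form. First I would recall that, since $C$ is a coalgebra over a field, every comodule is the directed union of its finite dimensional subcomodules; in particular $E(S)=\bigcup_\lambda N_\lambda$ where the $N_\lambda$ range over the finite dimensional subcomodules of $E(S)$, and I would want to produce a single finite dimensional $N$ that already equals $E(S)$.

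The key idea is to exhibit a finite dimensional subcomodule $N\subseteq E(S)$ with $lw(N)=n=lw(E(S))$ to which Proposition~\ref{4} applies. Since $lw(E(S))=n<\infty$, by Lemma~\ref{2} we have $J^n\cdot E(S)^*=0$ but $J^{n-1}\cdot E(S)^*\neq 0$, so there exists an element of $E(S)$ that is not killed by the corresponding Loewy level; concretely, because $E(S)=\bigcup N_\lambda$ and the Loewy length is the supremum of the $lw(N_\lambda)$ by Proposition~\ref{1}(iv)-type monotonicity, some finite dimensional $N_\lambda=:N$ must already achieve $lw(N)=n$. For such an $N$, Proposition~\ref{4} yields $T\in\Tt$ with $N\simeq E(T)^*$. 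Now I would run the argument of Proposition~\ref{4} one step further: the inclusion $N\hookrightarrow E(S)$ dualizes to a surjection $E(S)^*\twoheadrightarrow N^*\simeq E(T)$, and since $E(S)$ has simple socle $S$, $E(S)^*$ is local (unique maximal submodule), so this surjection onto the indecomposable injective $E(T)$ forces, via the splitting/Nakayama mechanism already used, that $E(S)^*\simeq N^*$. Taking duals back gives $E(S)\simeq N$, which is finite dimensional, and hence $E(S)\simeq N\simeq E(T)^*$.

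Concretely, I would set up the diagram dual to the one in Proposition~\ref{4}: from $\iota:N\hookrightarrow E(S)$ one gets $\iota^*:E(S)^*\to N^*$, and since $N^*\simeq E(T)$ is injective as a left $C^*$-module among the relevant rational modules, or alternatively working entirely on the comodule side, the composite exhibits $N$ as a direct summand of $E(S)$; indecomposability of $E(S)$ (it is an injective envelope, hence indecomposable) then gives $N=E(S)$. This is cleaner than dualizing: the inclusion $N\hookrightarrow E(S)$ of the finite dimensional comodule $N\simeq E(T)^*$, which has simple socle $S$ matching that of $E(S)$, splits because $N$ is injective (being a dual of the injective $E(T)$, and duals of injectives are projective, but I actually need $N$ injective as a comodule) --- here I would instead argue that $N$, being isomorphic to $E(T)^*$ and containing $S$ essentially, is itself an injective envelope of $S$, whence $N\simeq E(S)$ by uniqueness of injective envelopes.

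The main obstacle I anticipate is precisely this last splitting/injectivity step: one must verify that $N\simeq E(T)^*$ is injective as a right $C$-comodule, or equivalently identify it as the injective envelope of its simple socle. The duality of Corollary~\ref{3} tells us $N^*\simeq E(T)$ is injective (indecomposable injective with simple socle $T$), but transporting injectivity back across the $K$-linear dual from $E(T)$ to $E(T)^*=N$ requires care, since finite dimensional duality interchanges injectives and projectives rather than preserving injectivity. The correct route, I expect, is to avoid claiming $N$ is injective and instead compare injective envelopes directly: $N$ has essential simple socle $S$ (as $N\subseteq E(S)$ with $S$ essential in $E(S)$), so $E(N)=E(S)$; and because $N\simeq E(T)^*$ is, by the projective/injective duality applied to the QcF-type situation, actually an injective comodule --- this is the delicate point where I would invoke that a finite dimensional comodule which is the dual of an indecomposable injective is itself indecomposable injective, a fact that in the finite dimensional setting follows from the self-duality between finite dimensional left and right comodule categories under $(-)^*$ exchanging $E(T)$ with $E(T)^*$ as injective envelopes on the two sides. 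Granting this, $N$ is injective with essential socle $S$, so $N\simeq E(S)$, completing the proof.
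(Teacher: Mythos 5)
Your first half is sound and coincides with the paper's opening step: since $lw(E(S))=n$, any $x\in L_n(E(S))\setminus L_{n-1}(E(S))$ generates a finite dimensional subcomodule $N$ with $lw(N)=n$, and Proposition \ref{4} gives $N\simeq E(T)^*$. The genuine gap is exactly the step you flag and then ``grant'': that $N\simeq E(T)^*$ is injective as a comodule (equivalently, that $N\hookrightarrow E(S)$ splits, or that $E(S)^*\twoheadrightarrow N^*\simeq E(T)$ splits). This cannot be repaired along the lines you suggest. Under the contravariant duality between finite dimensional left and right comodules, the dual of an indecomposable injective is a projective-cover-type object, not an injective one; the assertion that such duals are again injective is not a general fact but is precisely a QcF-type property --- the paper itself remarks, after Theorem \ref{t1}, that the condition ``every left indecomposable injective is the dual of a right indecomposable injective'' is \emph{equivalent} to $C$ being QcF. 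Invoking it here is therefore circular: it is essentially the statement being proved. Your splitting variants fail for concrete reasons as well: to split the epimorphism $E(S)^*\twoheadrightarrow E(T)$ of left $C^*$-modules you would need $E(T)$ to be projective over $C^*$ (which, ranging over $T\in\Tt$, is the QcF condition of Theorem \ref{t0}), not injective; and injectivity of $E(T)$ in $\Mm^C$ gives nothing, because $E(S)^*$ is not known to be rational --- whether $E(S)$ is even finite dimensional is the point at issue. Note that the lifting in Proposition \ref{4} worked only because $E(S)^*$ is a direct summand of the free module $C^*$; there is no analogous source of projectivity for $E(T)$.

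The missing idea is the paper's chain argument, which replaces your one-step splitting by a counting contradiction. Supposing $E(S)$ infinite dimensional, extend $N=N_0$ to a strictly increasing chain $N_0\subsetneq N_1\subsetneq N_2\subsetneq\cdots$ of finite dimensional subcomodules with each $N_k/N_{k-1}$ simple. Each $N_k$ contains $N_0$, hence has Loewy length exactly $n$, hence by Proposition \ref{4} is isomorphic to some $E(T_k)^*$ and is therefore \emph{local}. Dualizing the chain $0\subseteq N_1/N_0\subseteq\cdots\subseteq N_k/N_0$, all of whose terms are local, one checks that the resulting composition series of $(N_k/N_0)^*$ is its Loewy series, whence $lw(N_k/N_0)=k\leq lw(N_k)\leq lw(E(S))=n$ for every $k$ --- absurd. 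Thus $E(S)$ is finite dimensional, the chain terminates at some $N_k=E(S)$, and one final application of Proposition \ref{4} gives $E(S)\simeq E(T)^*$. (A posteriori this also establishes your claim that any subcomodule of top Loewy length equals $E(S)$, since $E(S)\simeq E(T)^*$ is local with unique maximal subcomodule $L_{n-1}(E(S))$; but that is a consequence of the proposition, not an available step toward it.)
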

\begin{proof}
Let $n=lw(E(S))$. First note that there exists at least one finite dimensional subcomodule $N$ of $E(S)$ such that $lw(N)=n$: take $x\in L_n(E(S))\setminus L_{n-1}(E(S))$ and put $N=x\cdot C^*$ the left subcomodule (equivalently, right $A$-submodule) generated by $x$. Then $L_{n-1}(N)\neq N$ since otherwise $N\subseteq L_{n-1}(E(S))$, and therefore $n\leq lw(N)\leq lw(E(S))=n$. Let $N_0=N$. Assuming $E(S)$ is not finite dimensional, we can inductively build the sequence $(N_k)_{k\geq 0}$ of finite dimensional subcomodules of $E(S)$ such that $N_{k}/N_{k-1}$ is simple for all $k\geq 1$ (simple comodules are finite dimensional). Applying Proposition \ref{4} we see that each $N_k$ is local, since each is the dual of a comodule with simple socle (same argument as above; this also follows from the more general \cite[Lemma 1.4]{I}). Then $N_k/N_0$ has a composition series 
$$0=N_0/N_0\subseteq N_1/N_0\subseteq N_2/N_0\subseteq\dots\subseteq N_{k-1}/N_0\subseteq N_k/N_0$$
with each term of the series being local. Then, by duality, $M_k=(N_k/N_0)^*$ has a composition series 
$$0=X_0\subseteq X_1\subseteq X_2\subseteq\dots\subseteq X_{k-1}\subseteq X_k=(N_k/N_0)^*$$
such that $X_i\simeq (N_k/N_i)^*$, because of the short exact sequences of left $C^*$-modules and right $C$-comodules $0\rightarrow(\frac{N_k}{N_i})^*\rightarrow(\frac{N_k}{N_0})^*\rightarrow(\frac{N_i}{N_0})^*\rightarrow 0$. Therefore, $M_k/X_i\simeq (N_i/N_0)^*$ has simple socle (by duality), since $N_i/N_0$ are all local. Therefore, by definition, the above series of $M_k$ is the Loewy series and so $lw(N_k/N_0)=lw(M_k)=k$. But then $k=lw(N_k/N_0)\leq lw(N_k)\leq lw(E(S))=n$ for all $k$, which is absurd. Therefore $E(S)$ is finite dimensional. This also shows that the sequence $(N_k)_{k\geq 0}$ must terminate with some $N_k=E(S)$, because it can be continued whenever $N_k\neq E(S)$. Since $N_k\simeq E(T)^*$ for some $T\in\Tt$ by Proposition \ref{4}, this ends the proof.
\end{proof}


\begin{theorem}\label{t1}
Let $C$ be a coalgebra satisfying the right generating condition. Then the following conditions are equivalent: \\
(i) The injective envelope (as comodules) of every simple left comodule has finite Loewy length. \\
(ii) $C$ is right semiperfect.\\
(iii) $C$ is right QcF. \\
These conditions hold in pardicular if $C=C_n$ for some $n$, i.e. $C$ has finite coradical filtration.
\end{theorem}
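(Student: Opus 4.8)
The plan is to prove the equivalence of (i), (ii), (iii) by establishing the cycle of implications $(iii)\Rightarrow(ii)\Rightarrow(i)\Rightarrow(iii)$, exploiting the propositions already built up in this section. The implications $(iii)\Rightarrow(ii)$ and $(ii)\Rightarrow(i)$ should be essentially immediate from standard facts: a right QcF coalgebra is right semiperfect (this is part of the QcF theory, already invoked via \cite{L}), and if $C$ is right semiperfect then every injective envelope $E(T)$ of a simple right comodule $T\in\Tt$ is finite dimensional, whence has finite Loewy length; dualizing through Corollary \ref{3}, the same holds for the injective envelopes $E(S)$ of simple left comodules. The real content lies in $(i)\Rightarrow(iii)$.

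For $(i)\Rightarrow(iii)$, I would argue as follows. Assume $C$ satisfies the right generating condition and that every $E(S)$, for $S\in\Ss$, has finite Loewy length. Then Proposition \ref{5} applies to \emph{every} $S\in\Ss$: for each such $S$ there is $T\in\Tt$ with $E(S)\simeq E(T)^*$, and moreover every $E(S)$ is finite dimensional. Since $C\simeq\bigoplus_{S\in\Ss}E(S)^{n(S)}$ as a left comodule and each summand $E(S)$ is finite dimensional, $C$ is left semiperfect by the characterization recalled at the start of this section (the injective envelopes of the simple left comodules are finite dimensional). By the symmetric version of Proposition \ref{3a} (i.e. the right-handed statement: $C$ is right QcF iff $C$ is right semiperfect and generates its right comodules), it suffices to know that $C$ is right semiperfect and generates $\Mm^C$; the latter is our hypothesis, so I would reduce (iii) to showing $C$ is right semiperfect, i.e. to (ii).

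Thus the crux is to deduce right semiperfectness from the finite-dimensionality of all the $E(S)$. Here I would use the bijection furnished by Proposition \ref{5}: the assignment $S\mapsto T$ with $E(S)\simeq E(T)^*$ maps $\Ss$ into $\Tt$, and dualizing a finite dimensional right injective indecomposable back gives a left one, so this correspondence is a bijection between $\Ss$ and the set of those $T\in\Tt$ arising this way. The key point is that since $C$ generates $\Mm^C$ and we now know each left indecomposable injective $E(S)$ is finite dimensional with $E(S)^*\simeq E(T)$, every right injective indecomposable $E(T)$ appearing in the decomposition of $C$ is finite dimensional (being the dual of the finite dimensional $E(S)$). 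This is precisely the statement that $C$ is right semiperfect. I expect the main obstacle to be bookkeeping: verifying that the correspondence $S\mapsto T$ is genuinely a bijection covering all $T\in\Tt$ that occur in $C$, rather than just an injection, so that \emph{every} $E(T)$ (not merely some) is forced to be finite dimensional; this is where one must use the generating hypothesis together with the observation that $C=\bigoplus_{T\in\Tt}E(T)^{p(T)}$ and that generation by $C$ forces each $E(T)$ to be a quotient of a power of $C$, combined with the Loewy-length duality of Corollary \ref{3} to transport finiteness across sides.

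For the final sentence, I would verify that $C=C_n$ for some $n$ implies condition (i): if the coradical filtration of $C$ is finite, say $C=C_n$, then by Lemma \ref{2} every comodule $M$ with $\im\rho\subseteq M\otimes C_n$ satisfies $lw(M)\le n+1$; in particular each $E(S)\subseteq C$ (as a left comodule summand, viewing the left version of $C=C_n$) has $lw(E(S))\le n+1<\infty$, so (i) holds and hence so do (ii) and (iii). This recovers the main result of \cite{NTvO}.
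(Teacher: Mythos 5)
Your skeleton (all the content placed in (i)$\Rightarrow$(iii) via Proposition~\ref{5}, with Lemma~\ref{2} handling the $C=C_n$ addendum) is the right one, but your write-up has the left/right convention for semiperfect coalgebras backwards, and this is not a cosmetic slip: it breaks two of your steps. By the characterization recalled at the start of this section (Lin's theorem), $C$ is \emph{right} semiperfect if and only if the injective envelopes $E(S)$ of the simple \emph{left} comodules $S\in\Ss$ are finite dimensional; finite dimensionality of all the $E(T)$, $T\in\Tt$, is \emph{left} semiperfectness. Consequently: (a) your (ii)$\Rightarrow$(i), which starts from ``all $E(T)$ finite dimensional'' and transports this to the $E(S)$ by ``dualizing through Corollary~\ref{3},'' is a non-implication --- duals of finite dimensional injective right comodules are \emph{projective}, not injective, left comodules, and left semiperfectness does not imply right semiperfectness (cf.\ \cite[Example 3.2.8]{DNR}); with the correct convention, (ii)$\Rightarrow$(i) needs no dualizing at all. (b) More seriously, the step you call the crux of (i)$\Rightarrow$(iii) --- that the assignment $S\mapsto T$ of Proposition~\ref{5} is onto $\Tt$, so that every $E(T)$ is finite dimensional --- is both unnecessary and false in general. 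Unnecessary: once Proposition~\ref{5} gives that every $E(S)$ is finite dimensional, condition (ii) holds by the very characterization of right semiperfectness, so there is nothing left to prove. False: if $S\mapsto T$ were onto, then every $E(T)$ would be isomorphic to $E(S)^*$ for a finite dimensional $E(S)$, hence a direct summand of $C^*$ as a left $C^*$-module, hence projective; then $C\simeq\bigoplus_{T\in\Tt}E(T)^{p(T)}$ would be projective as a right $C$-comodule, i.e.\ $C$ would also be \emph{left} QcF by Theorem~\ref{t0}. But there exist right QcF coalgebras that are not left QcF (\cite[Example 3.3.7]{DNR}, \cite[Example 1.6]{NT1}), and any such coalgebra satisfies all the hypotheses of the theorem (right QcF implies the right generating condition). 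So the ``bookkeeping obstacle'' you flagged cannot be closed by any argument; that route must be dropped, not repaired.

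Once the convention is fixed, your repaired route is valid and close to the paper's: (i)$\Rightarrow$(ii) is exactly Proposition~\ref{5} read correctly, and (ii)$\Rightarrow$(iii) is the right-handed version of Proposition~\ref{3a} (right semiperfect plus the right generating condition implies right QcF), which holds by symmetry of that proposition's proof. The paper instead proves (i)$\Rightarrow$(iii) in one stroke, bypassing Proposition~\ref{3a}: each $E(S)\simeq E(T)^*$ is a direct summand of $C^*$ as a right $C^*$-module (since $E(T)$ is a direct summand of $C$ as a left $C^*$-module), hence projective, so $C\simeq\bigoplus_{S\in\Ss}E(S)^{n(S)}$ is projective as a right $C^*$-module and embeds in a coproduct of copies of $C^*$; as the paper notes, this simultaneously re-proves Proposition~\ref{3a}. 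Your treatment of (iii)$\Rightarrow$(ii) and of the final assertion ($C=C_n$ implies (i), via Lemma~\ref{2}) is correct.
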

\begin{proof}
We note that (iii)$\Rightarrow$(ii)$\Rightarrow$(i) are obvious so we only need to prove (i)$\Rightarrow$(iii). By Proposition \ref{5}, $\forall S\in\Ss,\,\exists T\in\Tt\,{\rm s.t.}\,E(S)\simeq E(T)^*$, so each $E(S)$ is projective as right $C^*$-module and it also embeds in $C^*$. Therefore, $C\simeq\bigoplus\limits_{S\in\Ss}E(S)^{n(S)}$ is projective as right $C^*$-module (and then also as left $C$-comodule). It also follows that since each $E(S)$ embeds in $C^*$ (it is actually a direct summand), we have an embedding $C\simeq\bigoplus\limits_{S\in\Ss}E(S)^{n(S)}\hookrightarrow \bigoplus\limits_{S\in\Ss}C^*{}^{n(S)}$.
\end{proof}

Note that the above provide another proof for Proposition \ref{3a}. In particular, it provides a direct proof for \cite[Theorem 4.1]{NTvO}. We also note that the property coming up in the above proofs, that the dual of every left indecomposable injective $C$-comodule is the dual of a right indecomposable injective, is proved to be equivalent to the coalgebra $C$ being QcF in \cite{I1}, \cite{I2}. We prefer giving the direct argument here.

\section{A general class of examples}

In this section we construct the general examples of this paper. The first goal is to start with an arbitrary coalgebra $C$ and build a coalgebra $D$ such that $C\subseteq D$ and $D$ satisfies the right generating condition.\\
Let $(C,\Delta,\varepsilon)$ be a coalgebra and $(M,\rho_M)$ a finite dimensional right $C$-comodule. Then $\End(M^C)$ - the set of comodule endomorphisms of $M$ (equivalently, endomorphisms of $M$ as left $C^*$-module) is a finite dimensional algebra considered with the opposite composition as multiplication. Considering $\End(M^C)$ as acting on $M$ on the right, $M$ becomes a $C^*$-$\End(M^C)$ bimodule. Denote $(A_M,\delta_M,e_M)$ the finite dimensional coalgebra dual to $\End(M^C)$; then it is easy to see that $M$ is an $A_M$-$C$ bicomodule, with the induced left $A_M$-comodule structure coming from the structure of a right $\End(M^C)$-module (this holds since there is an equivalence of categories $\Mm_{\End(M^C)}\simeq {}^{A_M}\Mm$ since $A_M$ is finite dimensional). Let $r_M:M\rightarrow A_M\otimes M$ be the left $A_M$-comultiplication of $M$. We will use the following Sweedler $\sigma$-notation:
\begin{eqnarray*}
\rho_M(m) & = & m_0\otimes m_1\in M\otimes C {\rm \,for\,} m\in M\\
r_M(m) & = & m_{(-1)}\otimes m_{(0)}\in A_M\otimes M {\rm \,for\,}m\in M\\
\Delta(c) & = & c_1\otimes c_2\in C\otimes C {\rm \,for\,} c\in C\\
\delta_M(a) & = & a_{(1)}\otimes a_{(2)}\in A_M\otimes A_M {\rm \,for\,} a\in A_M
\end{eqnarray*}
Then the compatibility relation between the left $A_M$-comodule and the right $C$-comodule structures of $M$ is written in $\sigma$-notation as
$$(*)\,\,\,\,\,\,m_{(-1)}\otimes m_{(0)0}\otimes m_{(0)1}=m_{0(-1)}\otimes m_{0(0)}\otimes m_{1}$$
We now proceed with the first step of our construction. Let $\Rr(C)$ be a {\bf set} of representatives for the isomorphism types of finite dimensional right $C$-comodules. With the above notations, let 
$$C'=(\bigoplus\limits_{M\in\Rr(C)}A_M)\oplus(\bigoplus\limits_{M\in\Rr(C)}M)\oplus C$$
and define $\delta:C'\rightarrow C'\otimes C'$ and $e:C'\rightarrow K$ by
\begin{eqnarray*}
\delta(a) & = & \delta_M(a)=a_{(1)}\otimes a_{(2)}\in A_M\otimes A_M\subseteq C'\otimes C' {\rm \,for\,} a\in A_M, M\in\Rr(C) \label{eq1}\\
\delta(m) & = & r_M(m)+\rho_M(m)=m_{(-1)}\otimes m_{(0)}+m_0\otimes m_1\in A_M\otimes M+M\otimes C\subseteq C'\otimes C' \\
(E1)\,\,\,\,\,\,\,\,\,\,\,& & {\rm \,for\,} m\in M, M\in \Rr(C) \label{eq2}\\
\delta(c) & = & \Delta(c)=c_1\otimes c_2\in C\otimes C\subseteq C'\otimes C' {\rm \,for\,} c\in C \label{eq3}
\end{eqnarray*}
(everything is understood as belonging to the appropriate - corresponding component of the tensor product $C'\otimes C'$)
\begin{eqnarray*}
e(a) & = & e_M(a), {\rm \,for\,} a\in A_M, M\in\Rr(C) \label{eq4}\\
(E2)\;\;\;\;\;\;\;\;\;\;\;\;\;\;\;\;\;\;\;\;\;\;\;\;\;\;\;e(m) & = & 0, {\rm \,for\,} m\in M, M\in\Rr(C) \;\;\;\;\;\;\;\;\;\;\;\;\;\;\;\;\;\;\;\;\;\;\;\;\;\;\;\;\;\;\;\;\;\;\;\;\;\;\;\;\;\;\;\;\;\;\;\;\;\;\;\;\;\\
e(c) & = & \varepsilon(c), c\in C
\end{eqnarray*}
It is not difficult to see that $(C',\delta,e)$ is a coalgebra. For example, for $m\in M$, $M\in\Rr(C)$
\begin{eqnarray*}
(\delta\otimes {\rm Id})\delta(m) & = & (\delta\otimes{\rm Id})(m_{(-1)}\otimes m_{(0)}+m_0\otimes m_1)\\
& = & m_{(-1)(1)}\otimes m_{(-1)2}\otimes m_{(0)}+m_{0(-1)}\otimes m_{0(0)}\otimes m_0+m_{00}\otimes m_{01}\otimes m_1
\end{eqnarray*}
and
\begin{eqnarray*}
({\rm Id}\otimes \delta)\delta(m) & = & ({\rm Id}\otimes \delta)(m_{(-1)}\otimes m_{(0)}+m_{0}\otimes m_{1})\\
& = & m_{(-1)}\otimes m_{(0)(-1)}\otimes m_{(0)(0)}+m_{(-1)}\otimes m_{(0)0}\otimes m_{(0)1}+m_0\otimes m_{11}\otimes m_{12}
\end{eqnarray*}
and here the first, second and third terms are equal respectively because of the coassociativity property of $M$ as left $A_M$-comodule, the compatibility from (*) and the coassociativity property of $M$ as right $C$-comodule. Also, we have $(e\otimes {\rm Id})\delta(m)=(e\otimes {\rm Id})(m_{(-1)}\otimes m_{(0)}+m_{0}\otimes m_{1})=e(m_{(-1)})\otimes m_{(0)}+e(m_{0})\otimes m_{1}=1\otimes e_M(m_{(-1)})m_{(0)}=1\otimes m$ etc.\\
For $(M,\rho_M)\in \Rr(C)$, since $C\subseteq C'$ is an inclusion of coalgebras, $M$ has an induced right $C'$-comodule structure by $\rho:M\rightarrow M\otimes C\subseteq M\otimes C'$ (the "co-restriction of scalars").

\begin{proposition}\label{6}
(i) Let $X(C)=(\bigoplus\limits_{M\in\Rr(C)}A_M)\oplus(\bigoplus\limits_{M\in\Rr(C)}M)$. Then $X(C)$ is a right $C'$-subcomodule of $C'$ and $C\oplus X(C)=C'$ as right $C'$-comodules.\\
(ii) If $M\in\Rr(C)$ and $Z_M=(\bigoplus\limits_{N\in\Rr(C)}A_N)\oplus(\bigoplus\limits_{N\in\Rr(C)\setminus \{M\}}N)\oplus C=A_M\oplus (\bigoplus\limits_{N\in\Rr(C)\setminus\{M\}}A_N\oplus N)\oplus C$, then $Z_M$ is a right $C'$-subcomodule of $C'$ and $C'/Z_M\simeq M$ as right $C'$-comodules.
\end{proposition}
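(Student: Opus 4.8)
The plan is to verify both parts directly from the explicit formulas (E1) for $\delta$, the only real work being to track in which summands of $C'\otimes C'$ each coaction lands. Throughout I will use that a subspace $Y\subseteq C'$ is a right $C'$-subcomodule precisely when $\delta(Y)\subseteq Y\otimes C'$, i.e. when the \emph{left} leg of $\delta$ stays in $Y$.

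For (i) I would first check $\delta(X(C))\subseteq X(C)\otimes C'$. On the $A_M$-summand, $\delta(a)=a_{(1)}\otimes a_{(2)}\in A_M\otimes A_M$, whose left leg lies in $X(C)$; on the $M$-summand, $\delta(m)=m_{(-1)}\otimes m_{(0)}+m_0\otimes m_1$, and the left legs $m_{(-1)}\in A_M$ and $m_0\in M$ both lie in $X(C)$. Hence $X(C)$ is a subcomodule. Symmetrically $\delta(c)=c_1\otimes c_2\in C\otimes C$ shows $C$ is a subcomodule. Since $C'=C\oplus X(C)$ already holds as vector spaces by construction, and both summands are subcomodules, this is a direct-sum decomposition of right $C'$-comodules.

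For the subcomodule claim in (ii) I would run the same bookkeeping, checking $\delta(Z_M)\subseteq Z_M\otimes C'$. The only summand of $C'$ omitted from $Z_M$ is the single distinguished copy of $M$, and on every remaining generator the left leg of $\delta$ stays inside $Z_M$: for $a\in A_N$ it is in $A_N\subseteq Z_M$; for $n\in N$ with $N\neq M$ the left legs $n_{(-1)}\in A_N$ and $n_0\in N$ both lie in $Z_M$; and for $c\in C$ it is in $C\subseteq Z_M$. Thus $Z_M$ is a subcomodule. The substantive step is identifying the quotient. Writing $\pi\colon C'\to C'/Z_M$ for the projection, the induced coaction is $\bar\delta(\pi(x))=(\pi\otimes{\rm Id})\delta(x)$. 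I would define $\Phi\colon M\to C'/Z_M$ by $\Phi(m)=\pi(m)$ on the distinguished copy of $M$, which is a linear isomorphism because $M$ is exactly the complement of $Z_M$ in $C'$. The key point is that in $\delta(m)=m_{(-1)}\otimes m_{(0)}+m_0\otimes m_1$ the first term has left leg $m_{(-1)}\in A_M\subseteq Z_M$, so it is killed by $\pi\otimes{\rm Id}$, leaving $\bar\delta(\pi(m))=\pi(m_0)\otimes m_1$. This agrees exactly with $(\Phi\otimes{\rm Id})\rho(m)=\pi(m_0)\otimes m_1$, so $\Phi$ intertwines the coactions and is an isomorphism of right $C'$-comodules.

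The only genuine subtlety — and the reason the construction is arranged this way — is that the $r_M$-part of $\delta$ on $M$, namely $m_{(-1)}\otimes m_{(0)}$, is precisely what obstructs $M$ from being a subcomodule of $C'$ on its own, yet it is exactly the term that dies in the quotient by $Z_M\supseteq A_M$; what survives is the co-restriction of the original right $C$-comodule structure $\rho$. I expect no obstacle beyond careful tracking of the Sweedler legs and of which summands belong to $X(C)$, to $Z_M$, and to their complements.
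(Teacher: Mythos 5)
Your proof is correct and takes essentially the same approach as the paper: the same Sweedler-leg bookkeeping establishes that $X(C)$ and $Z_M$ are subcomodules, and your isomorphism $\Phi\colon M\to C'/Z_M$ is just the inverse formulation of the paper's projection $p\colon C'=M\oplus Z_M\to M$, both resting on the same key observation that the term $m_{(-1)}\otimes m_{(0)}$ dies because $m_{(-1)}\in A_M\subseteq Z_M$, leaving only the corestricted $C$-coaction $m_0\otimes m_1$.
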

\begin{proof}
Using the relations defining $\delta$, we have $\delta(X(C))\subseteq X(C)\otimes C'$. Thus (i) follows; for (ii), let $p:C'=M\oplus Z_M\rightarrow M$ be the projection. We have $\delta(Z_M)\subseteq \bigoplus\limits_{N\in\Rr(C)}(A_N\otimes A_N)\oplus\bigoplus\limits_{N\in\Rr(C)\setminus\{M\}}(A_N\otimes N+N\otimes C)\oplus C\subseteq  Z_M\otimes C'$. Then for $c'=m+z\in C'$, $m\in M$, $z\in Z_M$, we have $(p\otimes Id_{C'})\delta(z)=0$ and so 
\begin{eqnarray*}
(p\otimes {\rm Id_{C'}})\delta(m+z) & = & (p\otimes {\rm Id_{C'}})(m_{(-1)}\otimes m_{(0)}+m_0\otimes m_1)\\
& = & p(m_0)\otimes m_1=m_0\otimes m_1\\
& = & p(m+z)_0\otimes p(m+z)_1=(\rho_M\circ p)(m)
\end{eqnarray*}
so $p$ is a morphism of right $C'$-comodules. Since $p=\Ker(p)=Z_M$, (ii) follows.
\end{proof}

We now proceed with the last steps of our construction. Build the coalgebras $C^{(n)}$ inductively by setting $C^{(0)}=C$ and $C^{(n+1)}=(C^{(n)})'$ for all $n$; let $\delta_n,\varepsilon_n$ be the comultiplication and counit of $C^{(n)}$. We have $C^{(n+1)}=C^{(n)}\oplus X(C^{(n)})$ as $C^{(n+1)}$-comodules by Proposition \ref{6}(i). Let 
$$C_\infty=\bigcup\limits_{n\geq 1}C^{(n)}$$
as a coalgebra with $\delta_\infty$, $\varepsilon_\infty$ defined as $\delta_\infty\vert_{C^{(n)}}=\delta_n$, $\varepsilon_\infty\vert_{C^{(n)}}=\varepsilon_n$. We also note that $\delta_\infty(X(C^{(n)}))=\delta_n(X(C^{(n)}))\subseteq X(C^{(n)})\otimes C^{(n)}\subseteq X(C^{(n)})\otimes C_\infty$ so each $X(C^{(n)})$ is a right $C_\infty$-subcomodule in $C_\infty$ and we therefore actually have
\begin{equation}\label{eq}
C_\infty=C\oplus \bigoplus\limits_{n\geq 1}X(C^{(n)})=C^{(n)}\oplus\bigoplus\limits_{k\geq n+1}X(C^{(k)})
\end{equation}
as right $C_\infty$-comodules. We can now conclude our

\begin{theorem}\label{t2}
The coalgebra $C_\infty$ has the property that any finite dimensional right $C_\infty$-comodule is a quotient of $C_\infty$. Consequently, $C_\infty$ satisfies the right generating condition.
\end{theorem}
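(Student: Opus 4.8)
The plan is to reduce everything to the construction already carried out: I will show that every finite dimensional right $C_\infty$-comodule is, up to isomorphism, one of the representatives $M \in \Rr(C^{(n)})$ for a suitable $n$, that such a representative is a quotient of $C^{(n+1)}$ by Proposition \ref{6}(ii), and that $C^{(n+1)}$ is a direct summand of $C_\infty$ as a right $C_\infty$-comodule by equation (\ref{eq}). Composing these will give the desired surjection from $C_\infty$.

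First I would carry out the reduction step. Let $(M,\rho)$ be a finite dimensional right $C_\infty$-comodule. Choosing a basis $e_1,\dots,e_d$ of $M$ and writing $\rho(e_j)=\sum_i e_i\otimes c_{ij}$, the coefficients $c_{ij}$ span a finite dimensional subspace of $C_\infty$; by the fundamental theorem of comodules $M$ is in fact a comodule over a finite dimensional subcoalgebra $D$ of $C_\infty$. Since $C_\infty=\bigcup_n C^{(n)}$ is a directed union and $D$ is finite dimensional, a basis of $D$ lies in a single $C^{(n)}$, so $D\subseteq C^{(n)}$ and $M$ is (by corestriction) a finite dimensional right $C^{(n)}$-comodule; hence $M\cong M'$ for some $M'\in\Rr(C^{(n)})$.

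Next I would produce the surjection and then deduce the generating condition. By Proposition \ref{6}(ii) applied to $C^{(n)}$ and $C^{(n+1)}=(C^{(n)})'$, the projection $p:C^{(n+1)}\to M'$ with kernel $Z_{M'}$ is a surjective morphism of right $C^{(n+1)}$-comodules. Here I would insert the small but essential observation that, for the inclusion of coalgebras $C^{(n+1)}\hookrightarrow C_\infty$, a $K$-linear map between two comodules carrying the corestricted $C_\infty$-structure is a morphism of $C^{(n+1)}$-comodules if and only if it is a morphism of $C_\infty$-comodules (the two conditions differ only by the injective map $\mathrm{id}_{M'}\otimes(C^{(n+1)}\hookrightarrow C_\infty)$, and tensoring with an injection over a field is injective). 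Thus $p$ is a surjection in $\Mm^{C_\infty}$. Since (\ref{eq}) exhibits $C^{(n+1)}$ as a direct $C_\infty$-comodule summand of $C_\infty$, composing the $C_\infty$-comodule projection $C_\infty\twoheadrightarrow C^{(n+1)}$ with $p$ yields a surjection $C_\infty\twoheadrightarrow M'\cong M$, proving the first assertion. For the ``consequently'', I would use that any right $C_\infty$-comodule $N$ is the directed sum $\sum_i N_i$ of its finite dimensional subcomodules, giving a surjection $\bigoplus_i N_i\twoheadrightarrow N$; precomposing each summand with a surjection $C_\infty\twoheadrightarrow N_i$ produces $\bigoplus_i C_\infty\twoheadrightarrow N$, which is exactly the right generating condition.

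I expect the main obstacle to be the bookkeeping around the comodule structures, rather than any hard estimate: specifically, confirming that the quotient map supplied by Proposition \ref{6}(ii), which a priori lives in $\Mm^{C^{(n+1)}}$, is legitimately a morphism in $\Mm^{C_\infty}$ after corestriction, and that $C^{(n+1)}$ splits off $C_\infty$ as a $C_\infty$-comodule (not merely as a $C^{(n+1)}$-comodule). Both points are settled by (\ref{eq}) together with the injectivity of $\mathrm{id}\otimes\iota$, but they are precisely where a sign-free but genuine error could slip in if one conflated the two comodule categories.
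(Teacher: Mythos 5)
Your proof is correct and follows essentially the same route as the paper's: reduce a finite dimensional $C_\infty$-comodule to a comodule over some $C^{(n)}$ via its associated finite dimensional coalgebra, apply Proposition \ref{6}(ii) to realize it as a quotient of $C^{(n+1)}$, use equation (\ref{eq}) to split $C^{(n+1)}$ off $C_\infty$, and finish with the fact that every comodule is the sum of its finite dimensional subcomodules. If anything, your write-up is slightly more careful than the paper's on the corestriction bookkeeping (why the epimorphism in $\Mm^{C^{(n+1)}}$ remains one in $\Mm^{C_\infty}$) and on the index, where the paper writes $C^{(n)}$ for what should be $C^{(n+1)}=(C^{(n)})'$.
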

\begin{proof}
If $(N,\rho_N)\in\Mm^{C_\infty}$ is finite dimensional, then the coalgebra $D$ associated to $N$ is finite dimensional (see \cite[Proposition 2.5.3]{DNR}; this follows since the image of $\rho_N$ is finite dimensional and then the second tensor components in $N\otimes C_\infty$ from a basis of $\rho_N(N)$ span a finite dimensional coalgebra). If $d_1,\dots,d_k$ is a basis of $D$, then there is an $n$ such that $d_1,\dots,d_k\in C^{(n)}$ i.e. $D\subseteq C^{(n)}$ so $\rho_N:N\rightarrow N\otimes D\subseteq N\otimes C^{(n)}\subseteq N\otimes C_\infty$. Thus $N$ has an induced right $C^{(n)}$-comodule structure and so $\exists$ $M\in\Rr(C^{(n)})$ such that $N\simeq M$ as $C^{(n)}$-comodules. Thus, by proposition \ref{6}(ii), there is an epimorphism $C^{(n)}\rightarrow N\rightarrow 0$ of right $C^{(n)}$-comodules. Then this is also an epimorphism in $\Mm^{C_\infty}$; by equation (\ref{eq}) $C^{(n)}$ is a quotient of $C_\infty$ (in $\Mm^{C_\infty}$) and consequently $N$ must be a quotient of $C_\infty$ as right $C_\infty$-comodules. Since any right $C_\infty$-comodule is the sum of its finite dimensional subcomodules, the statement follows.
\end{proof}

\begin{example}\label{e1}
Let $C$ be a coalgebra which is not right semiperfect. Then $C_\infty$ is not right semiperfect either, since a subcoalgebra of a semiperfect coalgebra is semiperfect (see \cite[Corollary 3.2.11]{DNR}). Then $C_\infty$ cannot be right QcF, since right QcF coalgebras are right semiperfect (see \cite[Corollary 3.3.6]{DNR}; see also \cite{NT1}), so $C_\infty$ is not left projective by Theorem \ref{t0}. But still, $C_\infty$ is a generator for the category of right $C_\infty$-comodules.
\end{example}

\begin{remark}
It is also possible for a coalgebra to be right generator and not be projective to the right; indeed, just take a coalgebra $C$ which is right QcF but not left QcF; then $C$ generates $\Mm^C$ but $C^C$ is not projective since it is not left QcF (such a coalgebra exists, e.g. see \cite[Example 3.3.7]{DNR}).
\end{remark}

\begin{example}\label{e2}
Let $A$ be the algebra dual to the coalgebra $C$ of \cite[Example 3.3.7 and Example 3.2.8]{DNR}. This is left QcF and not right QcF, and $C_0$ is not finite dimensional, and thus $C^*$ is not semilocal ($C^*/J\simeq C_0^*$). By \cite[Corollary 3.3.9]{DNR}, $C^*$ is right selfinjective, and it cannot be a right cogenerator since it is not semilocal. 
\end{example}

\subsection*{Another construction}

In the following we build another example of a coalgebra with the right generating condition without being right QcF, but this will be a colocal coalgebra, that is, a coalgebra whose coradical is a simple (even 1-dimensional) coalgebra. Thus, this will show that another important condition in the theory of coalgebras, the condition that the coradical is finite dimensional, is not enough to have that the right generating condition implies that the coalgebra is right QcF.

Let $K$ be an algebraically closed field and $(C,\Delta,\varepsilon)$ be a colocal pointed $K$-coalgebra, so the coradical $C_0$ of $C$ is $C_0=Kg$, with $g$ a grouplike element: $\Delta(g)=g\otimes g$, $\varepsilon(g)=1$ ($C$ is also called connected in this case). Let $\Ll(C)$ be a {\bf set} of representatives for the {\it indecomposable} finite dimensional right $C$-comodules. Keeping the same notations as above, we note that $\End(M^C)^{op}$ is a local $K$-algebra, since $M\in\Ll(C)$ is indecomposable. Moreover, its residue field is canonically isomorphic to $K$ since it is a finite dimensional division $K$-algebra over the algeraicaly closed field $K$. Thus, $A_M$ are colocal coalgebras and there exists a unique morphism of coalgebras $\sigma_M:K\rightarrow A_M$, with $g_M=\sigma_M(1)$ being the unique grouplike of $A_M$. 

Let $(C^\sim,\delta,e)=(\bigoplus\limits_{M\in\Ll(C)}(A_M\oplus M))\oplus C$ be the coalgebra defined by the same relations (E1) and (E2) as $C'$ above; let $I$ be generated by the elements $\{g-g_M\mid M\in \Ll(C)\}$ as a vector space (they will even form a $K$-basis). $I$ is a coideal since $\delta(g-g_M)=g\otimes g-g_M\otimes g_M=g\otimes (g-g_M)+(g-g_M)\otimes g_M$ and $e(g-g_M)=0$. Let $\Sigma^\sim=(\bigoplus\limits_{M\in\Ll(C)}Kg_M)\oplus Kg$, $\Sigma=Kg\subset C$ and $\Sigma^\vee=\Sigma^\sim/I$. Let $C^\vee=C^\sim/I$. With these notations we have 

\begin{proposition}\label{7}
$C^\vee$ is a colocal pointed coalgebra.
\end{proposition}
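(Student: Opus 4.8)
The plan is to first pin down the coalgebra structure of $C^\sim$ completely---showing it is pointed with coradical exactly $\Sigma^\sim$---and then transport this information across the quotient map $\pi\colon C^\sim\to C^\sim/I=C^\vee$. The key structural observation is that the ``new'' comodule parts $M$ only appear off the diagonal of $\delta$, so they cannot contribute to the coradical.

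First I would isolate the subspace $B=(\bigoplus_{M\in\Ll(C)}A_M)\oplus C$. By the defining relations (E1) we have $\delta(A_M)\subseteq A_M\otimes A_M$ and $\delta(C)\subseteq C\otimes C$ with no cross terms, so $B$ is in fact a coalgebra \emph{direct sum} of the $A_M$'s and of $C$. Since each $A_M$ is colocal pointed with coradical $Kg_M$, and $C$ is colocal pointed with coradical $Kg$, the fact that the coradical of a direct sum of coalgebras is the direct sum of the coradicals gives $B_0=(\bigoplus_M Kg_M)\oplus Kg=\Sigma^\sim$; in particular $B$ is pointed. As $B_0$ is a cosemisimple subcoalgebra of $C^\sim$ we automatically get $\Sigma^\sim\subseteq (C^\sim)_0$.

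The crux is the reverse inclusion $(C^\sim)_0\subseteq B$, and here I would exhibit a coalgebra filtration of $C^\sim$ whose bottom term is $B$: put $G_0=B$ and $G_n=C^\sim$ for $n\geq 1$. The only nontrivial filtration condition is $\delta(G_1)\subseteq G_0\otimes G_1+G_1\otimes G_0$, and this follows from the remaining defining relation, since for $m\in M$ one has $\delta(m)=r_M(m)+\rho_M(m)\in A_M\otimes M+M\otimes C\subseteq B\otimes C^\sim+C^\sim\otimes B$. By the standard fact that the coradical filtration is the minimal coalgebra filtration (so that $(C^\sim)_0\subseteq G_0$ for \emph{any} coalgebra filtration; see the coalgebra preliminaries in \cite{DNR}, \cite{M}), we conclude $(C^\sim)_0\subseteq B$. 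A cosemisimple subcoalgebra contained in $B$ lies in $B_0$, so $(C^\sim)_0=B_0=\Sigma^\sim$ and $C^\sim$ is pointed.

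Finally I would pass to $C^\vee=C^\sim/I$. Applying $\pi$ to the coradical filtration of $C^\sim$ produces a coalgebra filtration $\{\pi(C^\sim_n)\}$ of $C^\vee$, whence $(C^\vee)_0\subseteq\pi(\Sigma^\sim)$, while $\pi(\Sigma^\sim)$ is a sum of grouplikes and hence cosemisimple, giving $(C^\vee)_0=\pi(\Sigma^\sim)$. Since $I\subseteq\Sigma^\sim$ we have $\pi(\Sigma^\sim)=\Sigma^\sim/I=\Sigma^\vee$, and because the elements $g-g_M$ are linearly independent, $\dim\Sigma^\vee=(1+|\Ll(C)|)-|\Ll(C)|=1$, spanned by the single grouplike $\bar g=\pi(g)$. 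Thus $(C^\vee)_0=K\bar g$ is a one-dimensional (simple) coalgebra, so $C^\vee$ is colocal and pointed. I expect the main obstacle to be exactly the reverse inclusion $(C^\sim)_0\subseteq B$: once one recognizes $G_0=B\subseteq G_1=C^\sim$ as a coalgebra filtration the minimal-filtration property does the work, but the verification $\delta(M)\subseteq B\otimes C^\sim+C^\sim\otimes B$ against the defining relations is the one point that genuinely must be checked.
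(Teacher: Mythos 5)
Your proof is correct, but it takes a genuinely different route from the paper's. The paper argues on the dual side: it identifies $(C^\vee)^*\simeq I^\perp$ inside $(C^\sim)^*=C^*\times\prod_{M\in\Ll(C)}(M^*\times A_M^*)$, shows by an explicit computation that whenever two elements of $I^\perp$ sum to the identity one of them is invertible (so $I^\perp$ is a local algebra with radical $J\times\prod_{M}(M^*\times J_M)$), and then appeals to algebra--coalgebra duality to conclude that $C^\vee$ is colocal with coradical $\Sigma^\vee$. You stay entirely on the coalgebra side: the subcoalgebra $(\bigoplus_{M\in\Ll(C)}A_M)\oplus C$ (your $B$; beware that the paper uses the letter $B$ for the dual algebra $I^\perp$) is a coalgebra direct sum of colocal pointed coalgebras, hence has coradical $\Sigma^\sim$; the two-step coalgebra filtration $G_0=B\subseteq G_1=C^\sim$, whose only nontrivial condition $\delta(C^\sim)\subseteq B\otimes C^\sim+C^\sim\otimes B$ follows from (E1), combined with the standard minimality of the coradical filtration among coalgebra filtrations (Lemma 5.3.4 of \cite{M}) gives $(C^\sim)_0\subseteq B$, whence $(C^\sim)_0=\Sigma^\sim$; pushing forward along the surjective coalgebra map $\pi:C^\sim\to C^\vee$ then yields $(C^\vee)_0\subseteq\pi(\Sigma^\sim)=\Sigma^\vee=K\bar g$, and equality since $K\bar g$ is cosemisimple. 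Each approach buys something: yours avoids the duality step that the paper leaves terse (translating locality of the infinite-dimensional algebra $I^\perp$ back into colocality of $C^\vee$), and it gives the additional information that the intermediate coalgebra $C^\sim$ is itself pointed with coradical exactly $\Sigma^\sim$; the paper's computation, in exchange, exhibits the Jacobson radical of $(C^\vee)^*$ explicitly. One cosmetic flaw in yours: the count $\dim\Sigma^\vee=(1+|\Ll(C)|)-|\Ll(C)|$ is meaningless when $\Ll(C)$ is infinite, but it is also unnecessary, since every $g_M$ is congruent to $g$ modulo $I$ and $g\notin I$ (as $e$ vanishes on $I$ while $e(g)=1$), so $\Sigma^\vee=K\bar g$ directly.
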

\begin{proof}
Denote $\sigma:K\rightarrow C$ the canonical "inclusion" morphism $\sigma(1)=g$. The dual algebra of $C^\vee$ is $(C^\vee)^*=(C^\sim/I)^*\simeq I^\perp\subseteq (C^\sim)^*=C^*\times(\prod\limits_{M\in\Ll(C)}(M^*\times A_M^*))$. Let $B=I^\perp$ which is a subalgebra of $(C^\sim)^*$ and let $J_M$ and $J$ denote the Jacobson radicals of $A_M^*$ and $C^*$ respectively. Note that $B$ consists of all families $(a;(m^*,a_M)_{M\in \Ll(C)})\in (C^\sim)^*$ with $a_M(g_M)=a(g)$, equivalently, $\sigma_M(a_M)=\sigma(a)$. If two such families add up to the identity element $1_B$ of $B$, $(a;(m^*,a_M)_{M\in \Ll(C)})+(b;(n^*,b_M)_{M\in \Ll(C)})=(1;(0,1)_{M\in \Ll(C)})$, then $a_M+b_M=1\in A_M^*$ and $a+b=1\in A$ and so $a\notin J$ or $b\notin J$ since $A$ is local, say $a\notin J=Kg^\perp$ i.e. $a(g)\neq 0$. Then $a_M(g_M)=a(g)\neq 0$ and so all $a_M$ and $a$ are invertible. Thus $(a;(m^*,a_M)_{M\in \Ll(C)})$ is invertible with inverse $(a^{-1};-(a^{-1}m^*a_M^{-1},a_M^{-1})_{M\in \Ll(C)})$. This shows that $B=I^\perp$ is local with Jacobson radical $J\times(\prod\limits_{M\in\Ll(C)}(M^*\times J_M))$ and therefore, by duality, it is not difficult to see that $C^\vee$ is colocal with coradical $\Sigma^\vee$ respectively.
\end{proof}

\begin{remark}
We can easily see that we have a morphism of coalgebras $C\hookrightarrow A_M\oplus M\oplus C\rightarrow (A_M\oplus M \oplus C)/K\cdot(g-g_M)$ which is injective; then, it is also easy to see that $C^\vee$ is the direct limit of the family of coalgebras $\{C\}\cup\{(A_M\oplus M\oplus C)/K(g-g_M)\}_{M\in\Ll(C)}$ with the above morphisms. In fact, the algebra $A\times M^*\times A_M^*$ dual to $A_M\oplus M\oplus C$ is the upper triangular "matrix" algebra with obvious multiplication:
$$
\left(
\begin{array}{ccc}
	C^* & M^* \\
	0 & A_M^*
\end{array}
\right)
$$
\end{remark}

We note that $C$ embeds in $C^\vee$ canonically as a coalgebra following the composition of morphisms $C\hookrightarrow C^\sim\rightarrow C^\sim/I=C^\vee$, since $g\notin I$ so $C\cap I=0$. This allows us to view each right $C$-comodule $M$ as a comodule over $C^\vee$ (by the "corestriction" of scalars $M\rightarrow M\otimes C \rightarrow M\otimes C^\sim\rightarrow M\otimes C^\vee$). Let $p_M:C^\sim\rightarrow C^\sim/I\rightarrow M$ be the projection. 



\begin{proposition}\label{8}
(i) $p_M$ is a morphism of right $C^\vee$-comodules.\\
(ii) Each $M\in\Ll(C)$ is a quotient of $C^\vee/\Sigma^\vee$.\\
(iii) $C/\Sigma$ is a direct summand in $C^\vee/\Sigma^\vee$ as right $C^\vee$-comodules; in fact, if we denote $X^\sim(C)=\bigoplus\limits_{N\in \Ll(C)}(A_N\oplus N)$ and $X^\vee(C)=(X^\sim(C)+I)/I$ we have an isomorphism of right $C^\vee$-comodules:
$$\frac{C^\vee}{\Sigma^\vee}\simeq\frac{C}{\Sigma}\oplus X^\vee(C)$$
\end{proposition}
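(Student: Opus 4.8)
<br>

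The plan is to prove the three parts of Proposition~\ref{8} in order, leveraging the decomposition structure already established and the quotient projections from the earlier constructions. For part~(i), I would first recall that $p_M:C^\sim\rightarrow C^\sim/I=C^\vee\rightarrow M$ factors the projection through the quotient by the coideal $I$. The key observation is that this mirrors exactly the situation of Proposition~\ref{6}(ii): there, the projection $C'\rightarrow M$ onto the $M$-component was shown to be a morphism of right $C'$-comodules by a direct $\sigma$-notation computation using $\delta(m)=m_{(-1)}\otimes m_{(0)}+m_0\otimes m_1$, with the crucial point that the first summand $m_{(-1)}\otimes m_{(0)}\in A_M\otimes M$ projects to zero in the $C$-component of the target and the second summand $m_0\otimes m_1\in M\otimes C$ recovers $\rho_M$. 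I would carry out the analogous verification here, checking that the projection $C^\sim\rightarrow M$ is a morphism of right $C^\sim$-comodules (where $M$ carries its corestricted $C\subseteq C^\sim$ structure), and then that passing to the quotient $C^\sim/I$ is compatible because $I$ lies in the kernel and $I$ is a coideal, so that $p_M$ is a well-defined $C^\vee$-comodule map.

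For part~(ii), the goal is to realize each $M\in\Ll(C)$ as a quotient of $C^\vee/\Sigma^\vee$. Since $p_M:C^\vee\rightarrow M$ is a surjective $C^\vee$-comodule map by part~(i), it suffices to check that $p_M$ factors through $C^\vee/\Sigma^\vee$, i.e.\ that $\Sigma^\vee=\Sigma^\sim/I$ maps into the kernel. Concretely, I would verify that $p_M$ annihilates the image of $\Sigma^\sim=(\bigoplus_{N}Kg_N)\oplus Kg$ in $C^\vee$: the grouplikes $g$ and the $g_N$ all lie in the $C$-component and the $A_N$-components of $C^\sim$, never in the $M$-summand, so the composite projection to $M$ kills them. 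This yields the induced surjection $C^\vee/\Sigma^\vee\rightarrow M$.

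For part~(iii), I would establish the splitting $C^\vee/\Sigma^\vee\simeq C/\Sigma\oplus X^\vee(C)$. The natural starting point is the decomposition $C^\sim=C\oplus X^\sim(C)$ with $X^\sim(C)=\bigoplus_{N\in\Ll(C)}(A_N\oplus N)$, which is a decomposition of right $C^\sim$-comodules by the same argument as Proposition~\ref{6}(i) (the defining relations give $\delta(X^\sim(C))\subseteq X^\sim(C)\otimes C^\sim$). The subtlety is that the coideal $I$ is generated by the elements $g-g_M$, which straddle the two summands: $g\in C$ while $g_M\in A_M\subseteq X^\sim(C)$. So I would be careful here. The plan is to pass to the quotient by $I$ and show that, after quotienting, the images $C^\vee=(C+I)/I$ and $X^\vee(C)=(X^\sim(C)+I)/I$ together span $C^\vee$ and intersect only in the image of $\Sigma^\sim$; then quotienting further by $\Sigma^\vee$ separates them cleanly. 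More precisely, since $I$ identifies $g$ with each $g_M$ but with nothing else, the overlap between the images of $C$ and of $X^\sim(C)$ in $C^\vee$ is precisely the one-dimensional space $Kg$ (equal to $K g_M$ for each $M$ in the quotient), which is exactly $\Sigma^\vee$; modding out by $\Sigma^\vee$ removes this overlap and yields the direct sum.

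The main obstacle I anticipate is bookkeeping in part~(iii): ensuring that the identifications forced by $I$ collapse exactly to the coradical $\Sigma^\vee$ and nothing more, so that the residual decomposition is genuinely direct and respects the $C^\vee$-comodule structure. I would want to confirm that $X^\vee(C)$ is a $C^\vee$-subcomodule (again via $\delta(X^\sim(C))\subseteq X^\sim(C)\otimes C^\sim$, which survives the passage to the quotient since $I$ is a coideal) and that the complementary summand $(C+I)/I$ modulo $\Sigma^\vee$ is isomorphic to $C/\Sigma$. The comodule-map verifications themselves are routine $\sigma$-notation checks patterned on Proposition~\ref{6}; the genuine care is in tracking how $I$ glues the grouplikes without introducing any further relations among the remaining basis elements.
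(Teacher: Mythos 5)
Your proposal is correct and follows essentially the same route as the paper: part (i) by transporting the $C^\sim$-comodule morphism property from Proposition~\ref{6} through the coalgebra surjection $C^\sim\rightarrow C^\sim/I=C^\vee$, part (ii) by noting that $p_M$ kills the grouplikes and hence $\Sigma^\vee$, and part (iii) by using that $C$ and $X^\sim(C)$ are subcomodules whose images in $C^\vee$ interact only through the identified grouplikes. If anything, your bookkeeping in (iii) --- that the images of $C$ and $X^\sim(C)$ in $C^\vee$ intersect exactly in $\Sigma^\vee$, so the direct sum appears only after factoring it out --- is slightly more explicit than the paper's quick vector-space identification $C^\vee/\Sigma^\vee\simeq C^\sim/\Sigma^\sim$, but the substance is identical.
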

\begin{proof}
(i) By Proposition \ref{6} $p_M$ it is a morphism of right $C^\sim$-comodules, and then it is also a morphism of $C^\vee$-comodules via corestriction of scalars. Since the projection $C^\sim\rightarrow C^\sim/I=C^\vee$ is a morphism of coalgebras, it is also a morphism of right $C^\vee$-comodules, and since it also factors through $I$, we get that $p_M$ is a morphism of $C^\vee$-comodules.\\
(ii) follows since $p_M$ is a morphism of right $C^\vee$-comodules which cancels on $\Sigma^\vee$. \\
(iii) Note that the coradical $\Sigma$ of $C$ is identified with $\Sigma^\vee$ by the inclusion $C\hookrightarrow C^\vee$. Also both $C$ and $X^\sim(C)$ are right $C^\sim$-subcomodules of $C^\sim$ (just as above for $X(C)$ in $C'$), and then $C$ and $X^\vee(C)$ are also $C^\vee$-subcomodules in $C^\vee$. Since we also have an isomorphism of vector spaces $\frac{C^\vee}{\Sigma^\vee}\simeq\frac{C^\sim}{\Sigma^\sim}=\bigoplus\limits_{M\in\Ll(C)}(\frac{A_M}{Kg_M}\oplus M)\oplus\frac{C}{Kg}=\frac{C}{\Sigma}\oplus X^\vee(C)=\frac{C}{\Sigma^\vee}\oplus X^\vee(C)$, the proof is finished.
\end{proof}

To end the second construction, start with an arbitrary pointed colocal coalgebra over an algebraically closed field $K$. Denote $C^{[0]}=C$ and $C^{[n+1]}=(C^{[n]})^{\vee}$ for all $n\geq 0$. Put $C_\infty^\vee=\bigcup\limits C^{[n]}$. Then we have

\begin{theorem}\label{t3}
The coalgebra $C_\infty^\vee$ is colocal and has the property that any indecomposable finite dimensional right $C_\infty^\vee$-comodule is a quotient of $C_\infty^\vee$. Consequently, $C_\infty^\vee$ has the right generating condition.
\end{theorem}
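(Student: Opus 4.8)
The plan is to follow the blueprint of Theorem \ref{t2}, inserting the extra care forced by the fact that the second construction identifies all the grouplikes, so that the coradical $\Sigma=Kg$ is shared at every stage and the relevant splittings live only modulo $\Sigma$.

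For the colocal assertion I argue inductively. By hypothesis $C^{[0]}=C$ is colocal pointed with coradical $Kg$, and Proposition \ref{7} shows that $C^{[n+1]}=(C^{[n]})^\vee$ is again colocal pointed with the same one-dimensional coradical $Kg$; hence every $C^{[n]}$ is colocal with coradical $Kg$. Since $C_\infty^\vee=\bigcup_n C^{[n]}$ is a directed union and every simple subcoalgebra is finite-dimensional, hence contained in some $C^{[n]}$ and therefore in $Kg$, the coradical of $C_\infty^\vee$ equals $Kg$ and $C_\infty^\vee$ is colocal (indeed connected).

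The core of the argument is a decomposition of $C_\infty^\vee$ modulo $\Sigma$. Iterating the isomorphism of Proposition \ref{8}(iii) applied to each $C^{[k]}$ (with $\Sigma^\vee=\Sigma=Kg$ at every step), I obtain for each $n$ an isomorphism of right $C^{[n+1]}$-comodules, hence of right $C_\infty^\vee$-comodules by corestriction,
$$\frac{C^{[n+1]}}{\Sigma}\simeq\frac{C}{\Sigma}\oplus\bigoplus_{k=0}^{n}X^\vee(C^{[k]}),$$
the summand $C^{[n]}/\Sigma$ being exactly the image of the canonical embedding $C^{[n]}\hookrightarrow C^{[n+1]}$. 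Passing to the directed union, these splittings assemble into
$$\frac{C_\infty^\vee}{\Sigma}\simeq\frac{C^{[n+1]}}{\Sigma}\oplus\bigoplus_{k\geq n+1}X^\vee(C^{[k]}),$$
so $C^{[n+1]}/\Sigma$ is a direct summand, and hence a quotient, of $C_\infty^\vee/\Sigma$ and therefore of $C_\infty^\vee$. With this in hand the main claim is immediate: given an indecomposable finite-dimensional $N\in\Mm^{C_\infty^\vee}$, its coefficient coalgebra is finite-dimensional by \cite[Proposition 2.5.3]{DNR} (exactly as in Theorem \ref{t2}), hence contained in some $C^{[n]}$, so $N$ inherits a right $C^{[n]}$-comodule structure; corestriction leaves $\End(N)$ unchanged, so $N$ remains indecomposable and $N\simeq M$ for some $M\in\Ll(C^{[n]})$. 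By Proposition \ref{8}(ii), $M$ is a quotient of $C^{[n+1]}/\Sigma$, which is a quotient of $C_\infty^\vee$ by the previous step; composing, $N$ is a quotient of $C_\infty^\vee$. The generating condition then follows formally, since every right $C_\infty^\vee$-comodule is the directed union of its finite-dimensional subcomodules, each of which (having finite length) is a finite direct sum of indecomposables and hence a quotient of a finite direct sum of copies of $C_\infty^\vee$, so the trace of $C_\infty^\vee$ exhausts every comodule.

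I expect the main obstacle to be the middle step: verifying that the summand isomorphisms of Proposition \ref{8}(iii) are mutually compatible along the tower---precisely, that the copy of $C^{[n]}/\Sigma$ singled out inside $C^{[n+1]}/\Sigma$ coincides with the image of the coalgebra inclusion $C^{[n]}\hookrightarrow C^{[n+1]}$---so that the pieces $X^\vee(C^{[k]})$ accumulate coherently and the decomposition survives the passage to the direct limit. Everything else is bookkeeping parallel to the proof of Theorem \ref{t2}.
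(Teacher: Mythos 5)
Your proposal is correct and follows essentially the same route as the paper's own proof: the same reduction of a finite-dimensional indecomposable comodule to some $C^{[n]}$ via its (finite-dimensional) coefficient coalgebra, the same appeal to Proposition \ref{8}(ii), and the same splitting $\frac{C_\infty^\vee}{\Sigma}\simeq\frac{C^{[n+1]}}{\Sigma}\oplus\bigoplus_{k\geq n+1}X^\vee(C^{[k]})$ obtained from Proposition \ref{8}(iii) and the fact that each $X^\vee(C^{[k]})$ is a $C_\infty^\vee$-subcomodule. The compatibility issue you flag is indeed the only delicate point, and it is settled exactly as you suggest, since Proposition \ref{8}(iii) identifies the summand $C/\Sigma$ through the canonical inclusion $C\hookrightarrow C^\vee$.
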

\begin{proof}
Since all the coalgebras $C^{[n]}$ are colocal, say with common coradical $\Sigma$, so will be $C_\infty^\vee$. Let $(M,\rho)$ be a finite dimensional indecomposable $C_\infty^\vee$-comodule. Then, as before, $\rho(M)\subseteq M\otimes C^{[n]}$ for some $n$, since ${\dim}M<\infty$. So $M$ has an induced structure of a right $C^{[n]}$-comodule, and by Proposition \ref{8}(ii), $M$ is a quotient of $C^{[n+1]}/\Sigma$. But Proposition \ref{8} together with the construction of $C_\infty^\vee$, ensure that $\frac{C_\infty^\vee}{\Sigma}=\frac{C^{[n+1]}}{\Sigma}\oplus\bigoplus\limits_{k\geq n+1}X^\vee(C^{[k]})$. Moreover, since each $X^\vee(C^{[k]})$ is a right $C^{[k]}$-subcomodule in $C^{[k]}/\Sigma$ which is in turn a $C_\infty^\vee$-subcomodule of $C_\infty^\vee/\Sigma$, it follows that the $X^\vee(C^{[k]})$ are actually $C_\infty^\vee$-subcomodules in $C_\infty^\vee/\Sigma$. Therefore, $C^{[n+1]}/\Sigma$ splits off in $C_\infty^\vee$, and so $C_\infty^\vee$ has $M$ as a quotient. The final conclusion follows since any finite dimensional comodule is a coproduct of finite dimensional indecomposable ones.
\end{proof}

\begin{example}\label{e3}
Let $C$ be a connected (i.e. pointed colocal) coalgebra which is not right semiperfect. Then $C_\infty^\vee$ is not right semiperfect but has the right generating condition. Then, as in example \ref{e1}, $C_\infty^\vee$ is not right QcF. More specifically, we can take $C=\CC[[X]]^o$, the divided power coalgebra over the field of complex numbers, which has a basis $c_n, n\geq 0$ with comultiplication $\Delta(c_n)=\sum\limits_{i+j=n}c_i\otimes c_j$ and counit $\varepsilon(c_n)=\delta_{0,n}$ - the Kroneker symbol.
\end{example}

\begin{remark}
We could have made $C_\infty^\vee$ to also have all its finite dimensional comodules as quotients. Indeed, for this, it is enough that at each step of the construction - in passing from $C$ to $C^\vee$ - to consider the direct sum constructing $C^\vee$ to contain countably many copies of each right $C$-comodule $M$, that is, $C^\vee=[(\bigoplus\limits_{\NN}\bigoplus\limits_{M\in \Ll(C)}(A_M\oplus M))\oplus C]/I$. Then any finite dimensional comodule will be decomposed in a direct sum of finitely many indecomposable comodules, which we will be able to generate as a quotient of only one $C^{[n]}/\Sigma$ for some $n$, since enough of these indecomposable components can be found in $C^{[n]}/\Sigma$ (in fact, it is easy to see that $X^\vee(C^{[n]})/\Sigma=\bigoplus\limits_{\NN}\bigoplus\limits_{M\in\Ll(C^{[n-1]})}A_M\oplus M$ has as quotient any finite or countable sum of $M\in\Ll(C^{[n-1]})$).
\end{remark}


\bigskip\bigskip

\begin{center}
\sc Acknowledgment
\end{center}
The author wishes to thank professor Constantin Nastasescu from University of Bucharest who initially suggested this problem and inspired this subject. He also wishes to acknowledge the great support of professor Samuel D. Schack from SUNY - Buffalo; our extensive discussions helped in many aspects of this article. The thanks extend also to the referee, for his/her very careful reading of the paper and remarks which improved the presentation of the main ideeas.

\vspace{1cm}

\vspace*{3mm} 
\begin{flushright}
\begin{minipage}{148mm}\sc\footnotesize

Miodrag Cristian Iovanov\\
State University of New York - Buffalo, 244 Mathematics Building, Buffalo NY, 14260-2900, USA \&\\
University of Bucharest, Faculty of Mathematics, Str.Academiei 14,
RO-70109, Bucharest, Romania \\
{\it E--mail address}: {\tt
yovanov@gmail.com; e-mail@yovanov.net}\vspace*{3mm}

\end{minipage}
\end{flushright}


\begin{thebibliography}{J\c{S}}

\bibitem[A]{A}
E. Abe, \emph{Hopf Algebras}, Cambridge University Press, Cambridge, London, 1977; 277p.

\bibitem[AF]{AF}
D. Anderson, K.Fuller, \emph{Rings and Categories of Modules}, Grad. Texts in Math., Springer, Berlin-Heidelberg-New York, 1974.

\bibitem[CR]{CR}
C.W. Curtis, I. Reiner, \emph{Representation theory of finite groups and associative algebras}, interscience, New York, 1962.

\bibitem[DNR]{DNR} 
S. D\u asc\u alescu, C. N\u ast\u asescu, \c S. Raianu, \emph{Hopf Algebras: an introduction}. Vol. 235. Pure and Applied Mathematics, Marcel Dekker, New York, 2001.

\bibitem[F1]{F1}
C. Faith, \emph{Algebra II: Ring Theory}, Springer-Verlag, 1976.

\bibitem[F2]{F2}
C. Faith, \emph{Rings and Things and a fine array of 20th century algebra}, Mathematical Surveys and Monographs Vol. 65, Second Edition, AMS, 2004.


\bibitem[I]{I}
M.C.Iovanov, \emph{Co-Frobenius Coalgebras}, J. Algebra 303 (2006), no. 1, 146--153; \\eprint arXiv:math/0604251, 
http://xxx.lanl.gov/abs/math.QA/0604251.

\bibitem[I1]{I1}
M.C.Iovanov, \emph{Generalized Frobenius Algebras and the theory of Hopf algebras}, eprint www.yovanov.net; arXiv:0803.0775, \\ http://arxiv.org/PS$\underline{\,}$cache/arxiv/pdf/0803/0803.0775v1.pdf

\bibitem[I2]{I2}
M.C. Iovanov, \emph{Abstract integrals in algebra: coalgebras, Hopf algebras and compact groups}, eprint arXiv:0810.3740, \\ http://arxiv.org/PS$\underline{\;}$cache/arxiv/pdf/0810/0810.3740v1.pdf.

\bibitem[L]{L}
B.I-Peng Lin, \emph{Semiperfect coalgebras}, J. Algebra {\bf 30} (1974), 559-601.


\bibitem[M]{M}
S. Montgomery, \emph{Hopf algebras and their actions on rings}, Amer. Math. Soc., Providence, RI, 1993.

\bibitem[N]{N}
C. N\u ast\u asescu, \emph{Rings, modules, categories}, Edit. Acad. Romane, Bucharest, 1976.

\bibitem[NT1]{NT1}
J. Gomez Torrecillas, C. N\u ast\u asescu, \emph{Quasi-co-Frobenius coalgebras}, J. Algebra 174 (1995), 909-923.

\bibitem[NT2]{NT2}
J. Gomez Torrecillas, C. Manu, C. N\u ast\u asescu, \emph{Quasi-co-Frobenius coalgebras II}, Comm. Algebra Vol 31, No. 10, pp. 5169-5177, 2003.

\bibitem[NTvO]{NTvO}
C. N\u ast\u asescu, B. Torrecillas, F. van Oystaeyen, \emph{When is a Coalgebra a Generator}, Alg. Rep. Theory Vol 11, No. 2, pp. 179-190.



\bibitem[O]{O}
B.L. Osofsky, \emph{A generalization of Quasi-Frobenius rings}, J. Algebra 4 (1966), 373-387.

\bibitem[S]{S}
M.E. Sweedler, \emph{Hopf Algebras}, W.A. Benjamin, Inc.: New York, 1969; 336p.
\end{thebibliography}
\end{document}